\tikzstyle{none}=[inner sep=0pt]
\def\imod#1{\allowbreak\mkern10mu({\operator@font mod}\,\,#1)}
\newtheorem{theorem}{Theorem}[section]
\newtheorem{prop}[theorem]{Proposition}
\newtheorem{lemma}[theorem]{Lemma}
\newtheorem{claim}[theorem]{Claim}
\newtheorem{corollary}[theorem]{Corollary}
\theoremstyle{definition}
\newtheorem{definition}[theorem]{Definition}
\newtheorem{fact}[theorem]{Fact}
\theoremstyle{remark}
\newtheorem{remark}[theorem]{Remark}
\theoremstyle{remark}
\numberwithin{equation}{section}
    \DeclareMathOperator{\lh}{lh}
    \DeclareMathOperator{\dom}{dom}
    \DeclareMathOperator{\grph}{graph}
    \newcommand{\upharpoonrighttrict}{\!\upharpoonright\!}
    \newcommand{\forces}{\Vdash}
    \newcommand{\sm}{\ \!}
\def\bP{{\mathbb P}}
\def\bS{{\mathbb S}}
\def\bM{{\mathbb M}}
\newcommand{\baseset}{\Omega}
\renewcommand{\restriction}{\mathord{\upharpoonright}}
\title{Definable maximal discrete sets in forcing extensions}
\author{David Schrittesser}
\author{Asger T\"ornquist}
\address{Department of Mathematical Sciences, University of Copenhagen, Universitetsparken 5, 2100 Copenhagen, Denmark}
\email{asgert@math.ku.dk}
\email{david@logic.univie.ac.at}
\curraddr{(David Schrittesser) University of Toronto, Ontaria, Canada; Harbin Institute of Technology, Harbin, China}
\subjclass[2010]{03E15, 03E35}
\date{\today}
\keywords{Maximal discrete sets,  analytic relations, maximal orthogonal families of measures, Sacks forcing, Miller forcing, co-analytic sets.}
\begin{document}

\begin{abstract}
Let $\mathcal R$ be a $\Sigma^1_1$ binary relation, and recall that a set $A$ is $\mathcal R$-discrete if no two elements of $A$ are related by $\mathcal R$. We show that in the Sacks and Miller forcing extensions of $L$ there is a $\Delta^1_2$ maximal $\mathcal{R}$-discrete set. We use this to answer in the negative the main question posed in \cite{Fischer2010} by showing that in the Sacks and Miller extensions there is a $\Pi^1_1$ maximal orthogonal family (``mof'') of Borel probability measures on Cantor space. By contrast, we show that if there is a Mathias real over $L$ then there are no $\Sigma^1_2$ mofs.
\end{abstract}

\maketitle

\section{Introduction}\label{s.intro}

{\bf (A)} The present paper studies the definability of what in various contexts is called either \emph{independent sets}, \emph{orthogonal families}, or \emph{antichains}. To capture these notions at once, we adopt the nomenclature of \cite{miller} and make the following definition:

\begin{definition} Let $\mathcal R$ be a binary relation on a set $X$. A set $A\subseteq X$ is called \emph{$\mathcal R$-discrete} iff
\[
(\forall x,y\in A)\ x\neq y\implies \neg ( x \mathbin{\mathcal{R}} y).
\]
By a \emph{maximal $\mathcal R$-discrete set} we mean an $\mathcal R$-discrete set which is maximal under inclusion among $\mathcal R$-discrete sets.
\end{definition}
The above definition is familiar in the context of graphs, i.e., symmetric irreflexive relations, where discrete sets are often also called \emph{independent sets}. 

Another situation in which maximal discrete sets are of interest is when $\mathcal R$ is a \emph{compatibility relation}, i.e., when $\mathcal R$ is symmetric and reflexive.
Such relations often arise from a preorder: if $\preceq$ is a preorder, then the associated compatibility relation $\mathcal R_{\preceq}$ is defined by
$$
x\ \mathcal R_{\preceq}\  y\iff (\exists z) z\preceq x\wedge z\preceq y.
$$
In this context, an $\mathcal R_{\preceq}$-discrete set is often called an \emph{antichain} for $\preceq$.

A straight-forward transfinite induction shows that maximal discrete sets always exist for any binary relation $\mathcal R$. However, the \emph{definability} of such maximal discrete sets may be contentious. In G\"odel's constructible universe $L$ any $\Sigma^1_1$ (i.e., \emph{effectively} analytic) binary relation admits a $\Delta^1_2$ maximal discrete set, a fact that follows routinely from the existence of a $\Delta^1_2$ wellordering of the reals in $L$ of order type $\omega_1$ with a good coding of initial segments. On the other hand, if we let $\Gamma$ be the $F_\sigma$ (in fact $\Sigma^0_2$) graph on $2^\omega$ where $x\ \! \Gamma\ \! y$ iff $x$ and $y$ differ on exactly one bit, then a routine Baire category argument shows that $\Gamma$ admits no Baire measurable maximal discrete set, and so by \cite[Theorem~0.10]{ihoda1989delta} there is no $\Delta^1_2$ maximal $\Gamma$-discrete set if there is a Cohen real over $L$. The situation is parallel with random reals.

The first goal of this paper is to show that the above failure does not persist in all forcing extensions of $L$ with new reals.

\begin{theorem}\label{t.discrete}
Let $\mathcal R$ be a $\Sigma^1_1$ binary relation on an effectively presented Polish space, and let $x$ be a Sacks or Miller real over $L$. Then there is a $\Delta^1_2$ maximal $\mathcal R$-discrete set in $L[x]$.
\end{theorem}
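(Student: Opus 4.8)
The plan is to construct, by transfinite recursion along a carefully chosen wellordering of the reals of $L[x]$, a maximal $\mathcal R$-discrete set whose definition is $\Delta^1_2$. The guiding principle is the now-standard ``coding into a fusion'' technique (going back to the work on $\Pi^1_1$ maximal almost disjoint families and maximal orthogonal families): one builds the maximal discrete set $A$ inside $L$ as a union of countable approximations indexed by countable ordinals, simultaneously arranging that the generic real $x$ codes, via the fusion sequence used in the forcing, enough information to recognize membership in $A$ in a $\Sigma^1_2$ (and dually $\Pi^1_2$) way.

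\medskip

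First I would fix in $L$ a $\Delta^1_2$ wellordering $<_L$ of the reals of order type $\omega_1^L$ together with a good universal $\Sigma^1_2$ set and the usual machinery for uniformly decoding countable initial segments; since Sacks and Miller forcing are proper and $\omega_1$-preserving and add no new reals at limit stages of the right cofinality, we have $\omega_1^{L[x]} = \omega_1^L$ and every real of $L[x]$ appears in a countable initial segment that is itself coded by a real. Next, working in $L$, I would enumerate all pairs (condition $p$ in Sacks/Miller forcing, name $\dot{y}$ for a real) in order type $\omega_1$, and recursively thin out: at stage $\alpha$, given the countable $\mathcal R$-discrete set $A_\alpha$ built so far, I consider the $\alpha$-th pair $(p_\alpha, \dot y_\alpha)$ and, using a fusion argument, find a stronger condition $q_\alpha \le p_\alpha$ deciding enough about $\dot y_\alpha$ so that either $q_\alpha$ forces $\dot y_\alpha$ to be $\mathcal R$-related to some element already in $A_\alpha$ (so it cannot be added), or else $q_\alpha$ forces a specific real $z_\alpha$ into the generic extension which is $\mathcal R$-discrete from $A_\alpha$, and we set $A_{\alpha+1} = A_\alpha \cup \{z_\alpha\}$. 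Crucially, the fusion is performed so that the generic real $x$ passing through $q_\alpha$ literally reads off, on a prescribed block of coordinates determined by the fusion, a code for the pair $(\alpha, z_\alpha)$ together with a witness that $z_\alpha$ is $\mathcal R$-discrete from the previously chosen reals --- here one uses that $\mathcal R$ is $\Sigma^1_1$, so ``$\mathcal R$-discreteness from a countable coded set'' is $\Pi^1_1$ and hence absolutely decidable given the right real parameters.

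\medskip

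With the construction complete, membership ``$z \in A$'' becomes expressible as: there exists a countable ordinal $\alpha$ and a real coding the initial segment $A_{\alpha+1}$ of the construction such that this code is correct (a condition verified by a $\Pi^1_1$-in-the-code statement together with the $\Delta^1_2$ wellordering, asserting that each step was carried out according to the recursion) and $z$ is the real added at stage $\alpha$. This is $\Sigma^1_2$. For the complementary direction one argues that if $z \notin A$ then, by maximality, $z$ is $\mathcal R$-related to some element of $A$, and --- using that $x$ is Sacks/Miller-generic over $L$, so that every real of $L[x]$ lies in some Borel (in fact $\Delta^1_1$-in-$x$) set of $L$-reals hit by a condition in the fusion --- the failure of $z\in A$ is witnessed within a single countable stage, again giving a $\Sigma^1_2$ description of the complement; together these yield $\Delta^1_2$. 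I expect the main obstacle to be the bookkeeping in the fusion: one must interleave (i) deciding the auxiliary names $\dot y_\alpha$ enough to determine discreteness, (ii) reserving disjoint blocks of splitting coordinates for coding successive stages without exhausting the tree, and (iii) ensuring that the coding blocks are recoverable \emph{uniformly} from the generic real $x$ alone (not from the full generic filter), so that the eventual formula genuinely quantifies only over reals. The Miller case is slightly more delicate than the Sacks case because superperfect trees split infinitely, so the ``block reservation'' must be adapted to infinitely-branching stems, but the same idea applies.
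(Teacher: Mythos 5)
Your proposal has a fatal gap at the recursion step. The dichotomy you propose --- either $q_\alpha$ forces $\dot y_\alpha$ to be $\mathcal{R}$-related to something in $A_\alpha$, or $q_\alpha$ forces ``a specific real $z_\alpha$'' which is discrete from $A_\alpha$ and which you then add --- is not exhaustive and does not secure maximality in $L[x]$. If $\dot y_\alpha$ names a genuinely new real (e.g.\ the generic itself, or any non-constant continuous image of it), no condition forces it to equal a ground-model real, so the single $L$-real $z_\alpha$ you add need not be $\mathcal{R}$-related to the eventual value of $\dot y_\alpha$, and that value can remain discrete from your entire set. Indeed, your construction only ever puts $L$-reals into $A$, one per stage; for the empty relation $\mathcal{R}=\emptyset$ the unique maximal discrete set is the whole space, which a set of $\aleph_1$ ground-model reals cannot be in $L[x]$. (Section 2 of the paper makes the analogous point for mofs: no mof contained in $L$ survives the addition of a new real.)

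The missing idea is a perfect-set partition theorem. By continuous reading of names, a name for a real below $p$ is (on a subcondition) of the form $\eta(\dot x_G)$ for a continuous $\eta$, so the set of its potential values is the perfect set $\eta([p])$. Galvin's theorem for Sacks (and Todor\v{c}evi\'c's analogue for Miller) lets one shrink $p$ to a ``Galvin witness'' $q$ so that $\eta([q])$ is either $\mathcal{R}$-discrete --- in which case the paper adds the \emph{entire perfect set} $\eta([q])$ to the discrete set, which is exactly how new reals of $L[x]$ get captured --- or $\mathcal{R}$-complete, in which case adding the single point $\eta(z)$ for the leftmost branch $z$ of $q$ guarantees (by Shoenfield absoluteness) that every value $\eta$ takes on $[q]$ in the extension is related to a member of the set. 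Without this dichotomy there is no way to handle names for new reals. Separately, the ``coding into the fusion'' machinery you invoke is not needed for (and does not by itself yield) the $\Delta^1_2$ bound: the paper gets $\Sigma^1_2$-definability of the construction directly from the strongly $\Delta^1_2$ wellordering of $L$ together with the fact that the Sacks/Miller forcing relation for $\Pi^1_1$ formulas is itself $\Pi^1_1$ (Fact 3.2); $\Delta^1_2$ then follows formally from maximality. Coding into the generic object is the tool used later (Lemma 4.2) to upgrade $\Sigma^1_2$ mofs to $\Pi^1_1$, which is a different matter.
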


A suitably relativized version of the previous theorem applies more generally to $\mathcal R$ which are $\Sigma^1_1[a]$ for some real parameter $a$.

\medskip

{\bf (B)} Our main application of Theorem \ref{t.discrete} is to the compatibility relation that comes from \emph{absolute continuity} of Borel probability measures. Recall that if $\mu$ and $\nu$ are (non-trivial) measures on a measurable space $X$, then we write $\mu\ll\nu$ just in case every set which is null for $\nu$ is also null for $\mu$. 
Two measures $\mu$ and $\nu$ that are not compatible in $\ll$ (i.e., there is no measure $\ll$-below both of them) 
are called \emph{orthogonal}, written $\mu\perp\nu$. 
By the Lebesgue decomposition theorem, for Borel probability measures this is equivalent to that there exists a Borel set $A\subseteq X$ such that $\nu(A)=1$ and $\mu(A)=0$. 
Also recall that $\mu \sim \nu$ means $\mu\ll\nu \wedge \nu\ll\mu$, i.e., $\mu$ and $\nu$ have the same null sets.

Orthogonal families of measures in the Polish space $P(X)$ of Borel probability measures on a Polish space $X$ (see \cite[Theorem~17.23,~p.127]{kechris1995}) show up in many different contexts, including representation theory, ergodic theory, and operator algebras, see e.g.\ \cite{Shlyakhtenko2004, tornquist2009}. Interest in the definability of maximal orthogonal families of measures (abbreviated \emph{mof}s) can be traced back to the following question posed by Mauldin: If $X$ is a perfect Polish space, is there an \emph{analytic} maximal orthogonal family in $P(X)$? The answer turns out to be no, as shown by Preiss and Rataj \cite{preiss-rataj1985}. A new proof of this fact was provided by Kechris and Sofronidis \cite{kechris-sofronidis2001} based on Hjorth's turbulence theory. Later, it was shown by Fischer and the second author \cite{Fischer2010} that if $V=L$ then there is a $\Pi^1_1$ (lightface) mof in $P(2^\omega)$. On the other hand, \cite{Fischer2010} and \cite{Fischer2012} established that if there is a Cohen or random real over $L$ there are no $\Pi^1_1$ mofs. 

The seemingly restrictive nature of $\Pi^1_1$ mofs motivated the following question in \cite{Fischer2010}: \emph{If there is a $\Pi^1_1$ mof in $P(2^\omega)$, must all reals be constructible?} Further compounding the intrigue, we will see in \S \ref{s.indestruct} below that 
no mof remains maximal after adding a real, eliminating any hope of constructing `indestructible' mofs.
Nevertheless, in this paper we will answer the above question in the negative by showing:

\begin{theorem}
If $x$ is a Sacks or Miller real over $L$, then in $L[x]$ there is a $\Pi^1_1$ mof in $P(2^\omega)$.
\end{theorem}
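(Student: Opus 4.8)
The plan is to deduce the second theorem from Theorem~\ref{t.discrete} by encoding orthogonality of measures as a $\Sigma^1_1$ binary relation and then upgrading the $\Delta^1_2$ maximal discrete set produced there to a genuinely $\Pi^1_1$ maximal orthogonal family. First I would recall that the relation $\mu \mathbin{\mathcal{R}} \nu \iff \neg(\mu \perp \nu)$, i.e.\ $\mu$ and $\nu$ are \emph{not} orthogonal, is a $\Sigma^1_1$ subset of $P(2^\omega) \times P(2^\omega)$ (this is classical: non-orthogonality is witnessed by a common non-trivial lower bound in the $\ll$-order, or equivalently by the absolutely continuous parts in the Lebesgue decomposition being non-zero, and either formulation is analytic). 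It is moreover reflexive and symmetric, so $\mathcal{R}$-discrete sets are exactly orthogonal families, and maximal $\mathcal{R}$-discrete sets are exactly mofs. Applying Theorem~\ref{t.discrete} with this $\mathcal{R}$ and a Sacks or Miller real $x$ over $L$ gives, in $L[x]$, a $\Delta^1_2$ mof $\mathcal{A} \subseteq P(2^\omega)$.

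The next step is to improve $\Delta^1_2$ to $\Pi^1_1$. Here I would follow the strategy already present in \cite{Fischer2010}: one does not take the mof produced by Theorem~\ref{t.discrete} verbatim, but rather one inspects the \emph{construction} and arranges the bookkeeping so that membership in the family becomes co-analytic. The key classical ingredient is that $P(2^\omega)$ carries, for each measure, a canonical Borel (indeed $\Delta^1_1$-in-a-code) way to read off a ``support code'' or to compare two measures for orthogonality in a way that is absolute; combining this with the $\Sigma^1_2$-good wellordering of the reals in $L$, the family built along the wellordering can be shown to be $\Pi^1_1$ by the usual trick of writing ``$\mu \in \mathcal{A}$'' as ``for every candidate countable initial segment of the construction that correctly computes things up to $\mu$, $\mu$ gets put in'' — a $\forall$(real) statement whose matrix is arithmetic in the relevant codes, hence $\Pi^1_1$. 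Crucially, the forcing (Sacks or Miller) must preserve enough of this picture: one needs that the relevant $\Sigma^1_1$ and $\Pi^1_1$ facts about measures are absolute between $L$, the intermediate models, and $L[x]$, and that the Sacks/Miller real does not destroy maximality of the family being constructed. These absoluteness and preservation properties are exactly what the proof of Theorem~\ref{t.discrete} establishes for general $\Sigma^1_1$ relations, so the work is to check that the passage from a $\Delta^1_2$ to a $\Pi^1_1$ witness, which is a definability refinement and not a new forcing argument, goes through for this particular $\mathcal{R}$.

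Concretely I would organize the argument as: (1) verify $\neg\perp$ is $\Sigma^1_1$; (2) note discrete $\Leftrightarrow$ orthogonal and maximal discrete $\Leftrightarrow$ mof; (3) invoke Theorem~\ref{t.discrete} to get a $\Delta^1_2$ mof in $L[x]$; (4) re-run the construction underlying Theorem~\ref{t.discrete} with the extra bookkeeping from \cite{Fischer2010} (a good wellordering plus careful stagewise coding of the family and of the orthogonality witnesses) to get the $\Pi^1_1$ complexity; (5) confirm the family is still a mof in $L[x]$, using that Sacks and Miller forcing add no random or Cohen reals and that the construction was designed to be ``indestructible'' along the iteration/product being used. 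I expect step~(4) to be the main obstacle: getting the complexity down to exactly $\Pi^1_1$ rather than $\Delta^1_2$ requires that the orthogonality relation and the coding of initial segments mesh, i.e.\ that one can recognize in a $\Pi^1_1$ way that a given measure is the ``$\alpha$-th new measure forced into the family,'' and this is precisely where the special structure of $P(2^\omega)$ and the measure-orthogonality relation (as exploited in \cite{Fischer2010}) is needed rather than just the abstract $\Sigma^1_1$ input. The analogous remark for $\Pi^1_1$ mad families, mentioned in the abstract, would be handled the same way with $E \mathbin{\mathcal{R}} F \iff |E \cap F| = \aleph_0$ in place of non-orthogonality.
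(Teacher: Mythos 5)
Your steps (1)--(3) match the paper exactly: non-orthogonality is $\Sigma^1_1$, reflexive and symmetric, maximal discrete sets for it are mofs, and Theorem~\ref{t.discrete} applied in $L[x]$ yields a $\Delta^1_2$ mof. The gap is in step (4), which is where all the remaining work lives. You propose to obtain $\Pi^1_1$ by re-running the transfinite construction and expressing ``$\mu\in\mathcal A$'' as ``for every candidate countable initial segment of the construction that correctly computes things up to $\mu$, $\mu$ gets put in,'' with an ``arithmetic'' matrix. That matrix is not arithmetic: verifying that a countable sequence is a genuine initial segment of the $<_L$-least construction is itself $\Sigma^1_2$ (this is exactly the formula $\Psi$ in the proof of Theorem~\ref{t.sacks}), so a universal real quantifier in front of it lands you at $\Pi^1_3$, or $\Pi^1_2$ at best after standard reflection tricks --- not $\Pi^1_1$. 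No amount of bookkeeping over initial segments of the construction gets you to $\Pi^1_1$; the existential quantifier over the construction history has to be eliminated, not merely relativized.

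The paper's actual mechanism (Lemma~\ref{l.code}) is a clean black box that never revisits the construction: \emph{any} $\Sigma^1_2$ mof yields a $\Pi^1_1$ mof. Write $\mu\in\mathcal A\iff(\exists y)\,R(\mu,y)$ with $R$ a $\Pi^1_1$ relation, uniformize $R$ to make it functional, and then replace each $\mu\in\mathcal A$ by the equivalent atomless measure $F(\mu,z)$, where $z$ codes both (an index for) $\mu$ and the unique witness $y$ with $R(\mu,y)$, and where the Borel decoding map $G$ satisfies $G(F(\mu,z))=z$. Membership in the new family $\mathcal A'$ is then a universal statement over a Borel-plus-$\Pi^1_1$ matrix --- decode $z$ from $\mu'$, check $\mu'=F(\mu,z)$ and $R(\mu,y)$ --- hence $\Pi^1_1$; and $\mathcal A'$ is still a mof because $F(\mu,z)\sim\mu$. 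This is the ``special structure of $P(2^\omega)$'' you correctly sense is needed, but its role is to absorb the $\Sigma^1_2$-witness into the measure itself, killing the outer existential quantifier, rather than to mesh with the stagewise construction. Relatedly, your step (5) is a red herring: there is no iteration or product, the $\Delta^1_2\to\Pi^1_1$ upgrade is performed entirely inside $L[x]$, and maximality there is already supplied by Theorem~\ref{t.discrete} (indeed, Section~\ref{s.indestruct} of the paper shows no mof from $L$ can remain maximal after adding reals, so ``indestructibility'' is the wrong frame). The mad family case is handled by the analogous coding from \cite{tornquist2013}, as you guessed.
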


A counterpoint to this is obtained in the last section, where the following is shown:
\begin{theorem}
There are no $\Pi^1_1$ mofs in the Mathias extension of $L$.
\end{theorem}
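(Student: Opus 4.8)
The plan is to argue by contradiction. Suppose $A\subseteq P(2^\omega)$ were a $\Pi^1_1$ mof in the Mathias extension $L[m]$ of $L$; the goal is to manufacture a Borel probability measure orthogonal to every member of $A$ but not lying in $A$, contradicting maximality. This follows the template of the non-existence results for Cohen and random reals in \cite{Fischer2010, Fischer2012}, except that the orthogonal measure must be built from the Ramsey-theoretic features of the Mathias generic rather than from a Baire category or measure argument. It suffices to treat the lightface case; a parameter $a\in L$ is absorbed by relativizing everything to $L[a]$. Two absoluteness observations organise the argument: since the $\Pi^1_1$ predicate is absolute, $A\cap L = A^L$ is a $\Pi^1_1$ (hence orthogonal) family in $L$; and since ``$A$ contains a perfect set'' is a $\Sigma^1_2$ statement — there is a perfect tree all of whose branches satisfy the $\Pi^1_1$ definition — Shoenfield absoluteness together with Mansfield's perfect set theorem for $\Pi^1_1$ sets gives the dichotomy: either $A\subseteq L$, or already in $L$ the set $A^L$ contains a perfect subset $P_0$.

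In the first horn ($A\subseteq L$) the argument is direct and parallels \cite{Fischer2010}: from $m$ one builds a Borel-in-$m$ probability measure $\nu_m$ orthogonal to every Borel probability measure of $L$. Concretely, fix in $L$ a partition of $\omega$ into consecutive finite blocks and let $m$ determine, block by block, the local shape of a product measure; the Ramsey-genericity of $m$ over $L$ — equivalently, $m$ pseudo-intersects a $\mathcal{P}(\omega)/\mathrm{fin}$-generic ultrafilter over $L$, so that $[m]^\omega$ decides every $\Sigma^1_1$-in-$L$ subset of $[\omega]^\omega$ — is exactly what forces $\nu_m$ to live on a Borel set that is null for every ground-model measure at once. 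Since $A\subseteq L$ this gives $\nu_m\perp\mu$ for all $\mu\in A$, while $\nu_m\notin L$ and so $\nu_m\notin A$: contradiction.

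In the second horn $A^L$ contains a perfect set $P_0\in L$, which is a closed, hence $\boldsymbol{\Sigma}^1_1$, orthogonal family, so by the theorem of Preiss--Rataj and Kechris--Sofronidis \cite{preiss-rataj1985, kechris-sofronidis2001} (a theorem of ZFC, hence valid in $L$) it is not maximal in $L$. The idea is to leverage this into a measure $\nu_0$ orthogonal to all of $A\cap L$: if $A^L$ is itself not maximal in $L$ this is immediate, and if $A^L$ happens to be a $\Pi^1_1$ mof in $L$ then by the indestructibility phenomenon of \S\ref{s.indestruct} it loses maximality in $L[m]$, yielding $\nu_0\in L[m]$ with $\nu_0\perp A\cap L$. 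Either way $\nu_0\notin A$, so maximality of $A$ produces $\mu'\in A\setminus L$ with $\neg(\nu_0\perp\mu')$, and the remaining — and hardest — task is to neutralise the new members of $A$: using the combinatorial control the pure-decision (Prikry) property of Mathias forcing gives over the reals of $L[m]\setminus L$, one reflects the relevant $\Sigma^1_1$-in-$L$ information about each such $\mu'$ and re-runs the blockwise construction so that $\nu_m$ also evades the support structure of every $\mu'\in A\setminus L$, ending with a single measure orthogonal to all of $A$. It is precisely this last step that separates Mathias from Sacks and Miller, for which Theorem~\ref{t.discrete} delivers a $\Delta^1_2$ (indeed $\Pi^1_1$) mof: a $\Pi^1_1$ — or even $\Sigma^1_2$ — orthogonal family in $L[m]$ turns out to be ``small'' in a Ramsey-theoretic sense that the Mathias generic can exploit uniformly, the very smallness that the classical proof uses for genuinely analytic orthogonal families. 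Isolating this notion of smallness, and the uniformity of the assignment $m\mapsto\nu_m$ needed to handle all of $A$ simultaneously, is where the real work lies; it is also what upgrades the conclusion to: there are no $\Sigma^1_2$ mofs whenever a Mathias real over $L$ exists.
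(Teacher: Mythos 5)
Your proposal is not a proof: the step you yourself flag as ``where the real work lies'' --- neutralising the members of $A\setminus L$ in the second horn of your dichotomy --- is the entire content of the theorem, and nothing in your outline supplies it. Once $A$ contains a perfect set, $A\setminus L$ is an uncountable $\Pi^1_1$ family over which the pure-decision property of Mathias forcing gives no control of the kind you invoke; there is no ``support structure'' of an arbitrary co-analytic orthogonal family for a blockwise construction to evade, and producing a single measure orthogonal to all of it is precisely the problem to be solved. Two smaller points: Preiss--Rataj applies to \emph{analytic} families, so it tells you only that the closed set $P_0$ is non-maximal, which gives no information about the co-analytic set $A^L$; and the first horn ($A\subseteq L$) needs no block construction at all, since the Dirac measure $\delta_m$ at the new real is orthogonal to every ground-model measure (all atoms of a ground-model measure lie in $L$) and is not in $A$.

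The paper's proof runs along entirely different lines and does not split into cases. One fixes a Borel assignment $x\mapsto\mu^x$ of Kakutani product measures to $x\in[\omega]^\omega$, arranged so that a submonoid $\mathcal F$ of $([\omega]^\omega,\cdot)$ acts by measure-equivalence while each class of the relation $\mu^y\sim\mu^z$ is meagre in every cube $[x]^\omega$ (Proposition \ref{p.monoid}). Given a putative $\Pi^1_1$ mof $\mathcal A$, the ccc-below property makes the relation ``$(\nu_n)$ enumerates the members of $\mathcal A$ not orthogonal to $\mu^x$'' total, and $\Pi^1_1$ uniformization yields an $(\mathcal F,F_2^{P(2^\omega)})$-equivariant selector $\vartheta$ whose preimages of open sets are $\Delta^1_2$. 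The decisive fact --- absent from your proposal --- is that a Mathias real over $L$ makes every $\Delta^1_2$ set completely Ramsey \cite[Theorem 0.9]{ihoda1989delta}, so $\vartheta$ is continuous on some cube $[x]^\omega$; a turbulence-style local-orbit argument (Lemma \ref{l.hjorth}) then forces $\vartheta_0$ to be constant on a nonempty open set, and meagreness of the equivalence classes produces a perfect pairwise-orthogonal family all non-orthogonal to a single measure, contradicting ccc-below. Your instinct that Ramsey genericity is the lever is correct, but it must be applied to the definable selector attached to $\mathcal A$, not to a measure built directly from the generic.
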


As another application of Theorem \ref{t.discrete} we obtain a different proof of the following known result concerning maximal almost disjoint families (``mad'' families) of subsets of $\omega$. Such families are precisely the discrete sets for the compatibility relation that arises from the preorder $\subseteq^*$, inclusion modulo a finite set, in $[\omega]^\omega$. The study of the definability of mad families has a long history, see e.g.\ \cite{mathias1977happy, friedman2010projective, fischer2011projective, fischer2013co, tornquist2015}. 
From Theorem \ref{t.discrete} and \cite{tornquist2013} we get:

\begin{fact}\label{f.mad}
If $x$ is a Sacks or Miller real over $L$, then in $L[x]$ there is a $\Pi^1_1$ infinite mad family in $[\omega]^\omega$.
\end{fact}

We want to thank the anonymous referee for pointing out that a stronger result is already established by well-known methods:
In any model of CH and for any reasonable forcing notion $\bP$ which does not add dominating reals one can construct an infinite $\bP$-\emph{indestructible} mad family (see, e.g., \cite{hrusak,brendle}). In $L$, one can check that one can further demand that the indestructible family be $\Pi^1_1$.
Fact~\ref{f.mad} follows, letting $\bP$ be Sacks or Miller forcing.

\medskip

{\bf (C)} The paper is organized as follows: In \S \ref{s.indesctructible} we show there can be no indestructible $\Sigma^1_2[a]$ mof in $L[a]$.
In \S \ref{s.delta^1_2}, after we review some well-known facts about Sacks and Miller forcing,
we prove a slightly more general version of Theorem \ref{t.discrete}.
We also list some general properties of forcings which allow the proof to go through.
In \S \ref{s.mof}, we apply this to mofs and show that if there is a $\Sigma^1_2$ mof, there is a $\Pi^1_1$ mof.
\S \ref{s.mathias} presents an argument that a Mathias real over $L[a]$ rules out the existence of a $\Sigma^1_2[a]$ mof. 
Using the same ideas, we sketch a new proof that there is no analytic mof.
We close in \S \ref{s.questions} with open questions.

\medskip

{\it Acknowledgements}.
We would like to thank Stevo Todor\v{c}evi\'c for making us aware of the analogue of Galvin's theorem for Miller forcing, which we use in the proof of Corollary \ref{c.Galvin}. Further, we would like to thank the anonymous referee for their diligent corrections and many insightful suggestions.

The authors gratefully acknowledge the generous support from Sapere Aude grant no.\ 10-082689/FNU from Denmark's Natural Sciences Research Council.

\section{There is no indestructible mof}\label{s.indesctructible} \label{s.indestruct}

In this brief section we prove that there is no hope of finding a mof that survives in an outer model which has new reals; in particular, there is no mof which is indestructible by any forcing adding a real. 

A key property of $\ll$ that we use is the so-called \emph{ccc-below} property: If $\mu\in P(X)$, then any orthogonal family of measures $\mathcal F$ such that $\nu\ll\mu$ for all $\nu\in\mathcal F$ must be countable.
In fact, the argument only uses the ccc-below property and the simple definability of $\ll$, and that there is a perfect pairwise incompatible set;
so it holds for a wide class of compatibility relations.
We would like to thank the referee for suggesting the present form of the following theorem and its proof (these were also independently suggested by Benjamin D.\ Miller).

\begin{theorem}
Let $V \subseteq W$ be transitive models of (enough of) ZFC. Assume that $\mathcal A \in V$ is an orthogonal family in $P(2^\omega)$ and $W$ contains a new real, i.e., $(2^\omega)^W\setminus (2^\omega)^V \neq \emptyset$. Then $\mathcal A$ is not maximal in $W$.
In particular, if there is a mof $\mathcal A$ such that $\mathcal A \in L$, then $\operatorname{\mathcal{P}}(\omega) \subseteq L$.
\end{theorem}
\begin{proof}
For the following, we view $P(2^\omega)$ as an effectively presented Polish space in precisely the manner described in \cite{Fischer2010}.
From the product measure construction in \cite[p.~1463]{kechris-sofronidis2001} it follows easily that there is a Borel (in fact, $\Pi^0_1$) Cantor subset of $P(2^\omega)$ of pairwise orthogonal measures; let $Y \in V$ be such. Consider the predicate
$$
R = \{ (\mu, \nu )\in Y\times\mathcal A: \mu\not\perp\nu\}.
$$
The ccc-below property of $\ll$ implies that $R^{-1}(\nu) = \{\mu \in Y : \mu \not\perp\nu\}$ is countable for each $\nu \in P(2^\omega)$.
Letting $\nu \in V$ and $\langle \mu_n : n\in \omega\rangle$ be a sequence in $V$ which enumerates $R^{-1}(\nu)^V$,
observe that also
\[
W \vDash R^{-1}(\nu) = \{\mu_n : n\in \omega\}
\]
as the formula on the right is $\Pi^1_2$ (in fact $\Pi^1_1$, since $\perp$ is arithmetical) in a real coding $\langle \mu_n : n\in \omega\rangle$ and $\nu$.
In particular, $R^{-1}(\nu)^W \subseteq V$ for each $\nu \in V$.
But since $(2^\omega)^W\setminus (2^\omega)^V \neq \emptyset$, $W$ has a new branch $\nu'$ through $Y$ and by the previous, $\nu' \perp \mu$ for every $\mu \in \mathcal A$.
\end{proof}

\section{Definable maximal discrete sets in the Sacks or Miller extension}\label{s.delta^1_2}

In this section, we prove Theorem \ref{t.discrete} in the following, slightly stronger form:
\begin{theorem}\label{t.sacks}
Let  $a\in\omega^\omega$.
For any $\Sigma^1_1[a]$ binary relation $\mathcal R$ on $\omega^\omega$ there is a $\Delta^1_2[a]$ predicate which defines a maximal $\mathcal R$-discrete set in both $L[a]$ and $L[a][x]$, where $x$ is any Sacks or Miller real over $L[a]$.
\end{theorem}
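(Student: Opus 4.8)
The plan is to build the maximal $\mathcal R$-discrete set by transfinite recursion inside $L[a]$ along the canonical $\Sigma^1_2[a]$ wellordering $<_{L[a]}$ of the reals of $L[a]$, which has order type $\omega_1^{L[a]}$, and then to argue that the \emph{same} $\Delta^1_2[a]$ formula still defines a maximal $\mathcal R$-discrete set after forcing with Sacks or Miller forcing. Concretely, enumerate $\omega^\omega \cap L[a] = \{x_\xi : \xi < \omega_1^{L[a]}\}$ in the order $<_{L[a]}$ and put $A = \{x_\xi : x_\xi \text{ is not } \mathcal R\text{-related to any } x_\eta \in A \text{ with } \eta < \xi\}$, i.e.\ we greedily throw $x_\xi$ into $A$ unless doing so would violate discreteness with something already chosen. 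Maximality in $L[a]$ is automatic. The point of using a \emph{good} $\Sigma^1_2[a]$ wellordering with a $\Sigma^1_2[a]$-good coding of initial segments (as in the $V=L$ folklore) is that ``$x \in A$'' becomes expressible as ``there is a countable well-founded model of a fragment of $\mathsf{ZF}+V=L[a]$ containing $x$ and witnessing, via its internal version of the recursion, that $x$ is kept'', which is $\Sigma^1_2[a]$; and the complementary statement ``$x \notin A$'' gets an analogous $\Sigma^1_2[a]$ description because in such models the recursion is absolute and the relation $\mathcal R$, being $\Sigma^1_1[a]$, is upward absolute while its negation on the relevant pairs can be certified by the model. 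This gives the $\Delta^1_2[a]$ predicate.

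The heart of the matter is showing that this predicate still defines a \emph{maximal} $\mathcal R$-discrete set in $L[a][x]$ for $x$ a Sacks or Miller real — equivalently, that no new real $y \in L[a][x] \setminus L[a]$ is $\mathcal R$-discrete from all of $A$. Here one uses the defining combinatorial feature of Sacks and Miller forcing: a continuous-reading-of-names / fusion argument shows that if $\dot y$ is a name for a new real and $p$ is a condition, then below $p$ there is a condition $q$ and a continuous (indeed, on a perfect, respectively superperfect, subtree) function $f$ defined in $L[a]$ such that $q \Vdash \dot y \in \operatorname{ran}(f)$, and moreover $f$ is chosen so that its range is an uncountable Borel ($\Sigma^1_1[a]$, even $\mathbf{\Delta}^1_1[a]$) set $B \subseteq \omega^\omega$ lying in $L[a]$ and consisting of pairwise distinct reals. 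Now apply, \emph{inside $L[a]$}, the following: since $\mathcal R$ is $\Sigma^1_1[a]$ and $B$ is an uncountable Borel set coded in $L[a]$, either some element of $B$ is $\mathcal R$-related to an element of $A$, or $B$ contains a perfect $\mathcal R$-discrete subset; in the latter case, by maximality of $A$ in $L[a]$, every element of that perfect set is already $\mathcal R$-related to some element of $A$ — contradiction unless the set is a singleton, which it is not. (One must be slightly careful: what is really needed is that $B$ cannot be an uncountable set all of whose elements avoid $\mathcal R$-relation with $A$; this follows because such a $B$ would let us extend $A$ in $L[a]$, contradicting maximality there, using that $A$ is coanalytic-in-$L[a]$ so extendability is witnessed internally.) Hence $q$ forces that $\dot y$ is $\mathcal R$-related to some element of $A$, so $y$ is not a witness to non-maximality. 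Since $p$ was arbitrary and every new real is captured this way, $A$ remains maximal in $L[a][x]$, while discreteness of $A$ is a $\Pi^1_1$ property of $A$ and hence absolute.

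The main obstacle I expect is the interplay between the definability bookkeeping and the forcing capture: one needs the continuous function $f$ (from the fusion argument) to have its code in $L[a]$ and one needs the ``$B$ meets $\mathcal R$-relatives of $A$'' dichotomy to be decided in $L[a]$ in a way that survives to $L[a][x]$ — this is where Mansfield–Solovay-type perfect set absoluteness for $\Sigma^1_2[a]$ sets, together with Shoenfield absoluteness for the $\Pi^1_1$ statement ``$A$ is $\mathcal R$-discrete'', must be combined with the $\Sigma^1_2[a]$-correctness of countable well-founded models of enough of $V=L[a]$. A secondary technical point, needed to run the uniform argument for both Sacks and Miller, is to isolate the abstract properties of the forcing that make the fusion/continuous-reading step work (properness, the Sacks property or its Miller analogue, and the fact that every new real is ``captured'' by a ground-model-coded uncountable Borel set of pairwise distinct reals); I would state these as a short list of hypotheses and verify them for both forcings, as the paper's overview in \S\ref{s.delta^1_2} indicates.
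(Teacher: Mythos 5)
There is a genuine gap, and in fact the overall strategy cannot work: you build your maximal $\mathcal R$-discrete set $A$ entirely out of reals of $L[a]$ and then try to show that this same set of ground-model reals stays maximal in $L[a][x]$. For the paper's main application this is provably false: by the theorem of \S\ref{s.indestruct}, no $\Sigma^1_2$ mof contained in $L$ remains maximal in any extension of $L$ with a new real (the ccc-below property of $\ll$ plus Mansfield--Solovay forces a new orthogonal measure to appear). So for $\mathcal R$ equal to non-orthogonality of measures, the set your recursion produces is guaranteed to lose maximality once a Sacks or Miller real is added. The correct construction must therefore define a set that is \emph{not} contained in $L[a]$: the recursion has to run over pairs $(p,\eta)$ of conditions and (codes for) continuous functions, i.e.\ over all potential names for new reals, and at each stage it must add either an entire perfect set $\eta([q])$ (when that image is $\mathcal R$-discrete) or a single point of it. The resulting $\Sigma^1_2$ predicate ``$y=\eta(z)$ for some $z\in[q]$ with $(q,\eta)$ in the constructed family'' then picks up new reals of the extension, which is exactly how maximality survives.

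The step where your argument concretely breaks is the ``capture'' dichotomy. From maximality of $A$ in $L[a]$ you can only conclude that \emph{some} (ground-model) element of the Borel set $B=\operatorname{ran}(f)$ is $\mathcal R$-related to $A$; you need that the \emph{generic} element $f(\dot x_G)$, which is not in $L[a]$, is so related, and nothing in your dichotomy delivers that. Moreover the dichotomy itself (``either some element of $B$ meets $A$, or $B$ contains a perfect $\mathcal R$-discrete subset'') is not valid: $B$ could be $\mathcal R$-complete and disjoint from the $\mathcal R$-neighbourhood of $A$. What is actually needed here is the Ramsey-theoretic input of Corollary~\ref{c.Galvin} (Galvin's theorem for Sacks, Todor\v{c}evi\'c's analogue for Miller): shrink $p$ so that $\eta([p])$ is either $\mathcal R$-discrete or $\mathcal R$-complete. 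In the discrete case the construction has already put all of $\eta([q])$ into the set, so the generic real lands inside it; in the complete case a single ground-model point $\eta(z)$ with $z\in[q]$ is put in, and Shoenfield absoluteness of ``$(\forall x\in[p])\,\eta(x)=\eta(z)\vee\eta(x)\mathbin{\mathcal R}\eta(z)$'' forces the generic witness to be related to $\eta(z)$. Your proposal contains neither the perfect-set-adding step nor the completeness half of the dichotomy, and both are indispensable.
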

The theorem applies to arbitrary effectively presented Polish spaces, since any two uncountable such spaces are $\Delta^1_1$ isomorphic.
The argument also applies more generally to \emph{arboreal} forcing notions satisfying certain conditions, which we list in Theorem \ref{t.arboreal}. 

\medskip

Following the anonymous referee's suggestion, we mention the following version of Theorem~\ref{t.sacks}, in which we allow an arbitrary model of CH (rather than a model of the form $L[a]$) as the ground model. 
The proof is exactly the same, minus the definability considerations. 
The reader can find similar results, e.g., in \cite{brendle-perfect}.
\begin{theorem}\label{t.sacks.ch}
Assume CH holds in $V$, let $x$ be a Sacks or Miller real over $V$,
and let $\mathcal R$ be a $\mathbf{\Sigma}^1_1$ binary relation on $\omega^\omega$. Then there is a sequence 
$\langle T_\alpha : \alpha < \omega_1\rangle \in V$ 
of trees on $\omega$ (i.e, of codes of closed subsets of $\omega^\omega$) such that $\bigcup_{\alpha < \omega_1} [T_\alpha]$ defines a maximal $\mathcal R$-discrete set 
in both $V$ and $V[x]$.
\end{theorem}

\medskip

Before delving into the proof, we collect a few preliminaries about Sacks forcing $\bS$ and Miller forcing $\bM$.
Firstly, we need the following elementary fact about descriptive complexity calculations and the forcing relation. 
Let $\bP \in \{ \bS, \bM \}$.
In either case, we denote by $\dot x_G$ the name for the generic real.

\begin{fact}\label{f.complexityforces}
If $\varphi(x,y)$ is a $\Pi^1_1$ formula, the set 
\[
\{(p,a)\in\bP\times\omega^\omega:p\forces_{\bP} \varphi(\dot x_G,\check a)\}
\]
is $\Pi^1_1$.
\end{fact}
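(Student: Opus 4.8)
The plan is to analyze the forcing relation $p \forces_{\bP} \varphi(\dot x_G, \check a)$ by exploiting the fact that both Sacks forcing $\bS$ and Miller forcing $\bM$ consist of perfect trees ordered by inclusion, and that the generic real is the unique branch through all conditions in the generic filter. The key structural feature is that for arboreal forcings of this kind there is a \emph{fusion} argument showing that $p \forces \varphi(\dot x_G, \check a)$ is equivalent to a statement quantifying only over the branches of $p$ together with certain combinatorial data about $p$ itself, rather than over the full generic filter. Concretely, I would first reduce to the case where $\varphi$ is $\Pi^0_1$, say $\varphi(x,y) \equiv \forall n\, \psi(x\restriction n, y\restriction n)$ with $\psi$ recursive, by using the tree representation of $\Pi^1_1$ sets: write $\varphi(x,y) \iff \forall z\, \exists n\, \theta(x\restriction n, y \restriction n, z\restriction n)$ with $\theta$ recursive (equivalently, a well-foundedness assertion about a recursive tree built from $x,y$), and handle the extra universal real quantifier at the end.

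The heart of the matter is the following claim, to be proved by a fusion/continuous-reading-of-names argument: for a perfect tree $p$ and a $\Pi^0_1$ formula $\varphi$, one has $p \forces_{\bP} \varphi(\dot x_G, \check a)$ if and only if $\varphi(b, a)$ holds for every branch $b$ of $p$ — or, more carefully for Miller forcing where one must be slightly more attentive to the splitting structure, an equivalent statement asserting that a certain recursively-presented subtree of $p$ (determined uniformly from $p$ and $a$) is well-founded. Since ``$b$ is a branch of $p$'' is an arithmetic (indeed $\Pi^0_1$) condition in $b$ and $p$ (coding $p$ as an element of $2^{\omega^{<\omega}}$ or $\omega^{\omega^{<\omega}}$), the displayed set becomes
\[
\{(p,a) : \forall b\in[p]\ \varphi(b,a)\},
\]
and a universal quantifier over the closed set $[p]$ in front of a $\Pi^1_1$ matrix keeps us in $\Pi^1_1$ by the standard closure properties (one uncovers the hidden $\forall z$ from the $\Pi^1_1$-ness of $\varphi$ and absorbs it with the $\forall b$, all in front of an arithmetic matrix, which is $\Pi^1_1$). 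Running this through for the general $\Pi^1_1$ $\varphi$ via its tree representation yields exactly the claimed $\Pi^1_1$ bound on $\{(p,a) : p\forces_{\bP}\varphi(\dot x_G,\check a)\}$.

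I expect the main obstacle to be establishing the fusion characterization of forcing cleanly enough that the complexity computation is transparent, and in particular handling Miller forcing, where conditions are superperfect trees in $\omega^{<\omega}$ and the naive ``all branches satisfy $\varphi$'' equivalence needs care (a condition may fail to decide $\varphi$ even when all its branches satisfy it, because of unbounded splitting — so one really wants the equivalent well-foundedness reformulation). A secondary point is verifying that the map $p \mapsto [p]$, and the whole coding of perfect/superperfect trees, is arithmetic so that no definability is lost; this is routine but must be stated. Once the forcing $=$ ``all branches satisfy $\varphi$'' (in the well-founded-tree reformulation) equivalence is in hand, the rest is a bookkeeping exercise in the arithmetical hierarchy of effective descriptive set theory, and the $\Pi^1_1$ conclusion drops out.
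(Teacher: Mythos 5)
There is a genuine gap, and it is located exactly where you placed your main claim. The asserted equivalence ``$p\forces_{\bP}\varphi(\dot x_G,\check a)$ iff $\varphi(b,a)$ holds for every branch $b$ of $p$'' is false for $\Pi^1_1$ (indeed already for $\Sigma^0_1$, hence $\Pi^1_1$) formulas: take $\varphi(x,a)\equiv x\neq a$ and any condition $p$ with $a\in[p]$. Then $p\forces\dot x_G\neq\check a$ because the generic real is new, yet the branch $a$ of $p$ fails $\varphi(\cdot,a)$. So the set $\{(p,a):(\forall b\in[p])\,\varphi(b,a)\}$ you end up computing is a proper subset of the set in the statement. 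The correct characterization, which is the actual content of the paper's proof, is that $p\forces_{\bS}\varphi(\dot x_G,\check a)$ iff the $\Sigma^1_1$ set $\{x\in[p]:\neg\varphi(x,a)\}$ of counterexamples is \emph{countable} (by the perfect set theorem in one direction and $\Pi^1_1$-absoluteness plus ``the generic avoids every ground-model countable set'' in the other), and for Miller forcing iff that set is covered by a $K_\sigma$ set (Kechris--Saint Raymond/Hurewicz). Turning ``this $\Sigma^1_1$ set is countable'' into a $\Pi^1_1$ condition is the nontrivial step: one needs the \emph{effective} perfect set theorem to say that a countable $\Sigma^1_1[a]$ set admits a $\Delta^1_1[a]$ enumeration, and then the closure of $\Pi^1_1$ under the quantifier $(\exists(x_n)\in\Delta^1_1[a])$. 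None of this machinery appears in your proposal, and without it you only get the (incorrect) smaller set.

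Your proposed reduction to the $\Pi^0_1$ case also does not go through: writing $\varphi(x,y)$ as $\forall z\,\exists n\,\theta(x\restriction n,y\restriction n,z\restriction n)$ and ``handling the universal real quantifier at the end'' founders on the fact that the forcing relation does not commute with $\forall z$, since the extension contains new reals $z$; $p\forces\forall z\,\psi(\dot x_G,z,\check a)$ is not equivalent to $\forall z\in V\;p\forces\psi(\dot x_G,\check z,\check a)$. (Your $\Pi^0_1$ claim itself is fine for Sacks, since a nonempty relatively open subset of $[p]$ contains $[q]$ for some $q\leq p$, but that special case does not lift.) Your instinct that Miller needs a well-foundedness reformulation is pointing in roughly the right direction, but the dichotomy you need there is the $K_\sigma$-covering one, not a branch-by-branch statement.
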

\begin{proof}
We treat the case $\bS$ in detail. 
Clearly,  $p\forces_{\bS} \varphi(\dot x_G,\check a)$ if and only if the analytic set 
\begin{equation}\label{set}
A= \{x\in [p]:\neg\varphi(x,a)\}
\end{equation} 
is countable: If $A$ is uncountable, then by the perfect set theorem there
is a condition $q \leq p$ with $[q]\subseteq A$, and $q\forces_{\bS} \neg\varphi(\dot x_G,\check a)$ by $\Pi^1_2$-absoluteness;
if, on the other hand, $A$ is countable, then $1_{\bS} \forces \dot x_G \notin \check A$, and by $\Pi^1_1$-absoluteness $p\forces (\forall x \in [\check p]\setminus \check A)\; \varphi(x,\check a)$, proving the equivalence.

If $A$ is countable, (the proof of) the effective perfect set theorem \cite[Theorem~4F.1,~p.~243]{moschovakis2009descriptive} gives 
a sequence $(x_n)_{n\in\omega}\in L_{\omega_1^a}[a]$ such that
$$
\{x\in 2^{\omega}:\neg\varphi(x,a)\}=\{x_n:n\in\omega\}.
$$
Thus $p\forces_{\bS} \varphi(\dot x_G,\check a)$ if and only if
\[
(\exists (x_n)_{n\in\omega}\in\Delta^1_1[a])(\forall x \in 2^\omega) \big [ \neg\varphi(x,a)\implies (\exists n)\; x=x_n \big],
\]
which is $\Pi^1_1$ by \cite[Corollary~4.19,~p.~53]{mansfield1985recursive}.

For $\bM$, $p\forces_{\bM} \varphi(\dot x_G,\check a)$ precisely if $\{x\in 2^\omega:\neg\varphi(x,a)\}$ is contained in a $K_\sigma$ set by \cite{kechris1977notion} (or see \cite[Corollary~21.23,~p.~178]{kechris1995}). The rest of the proof is analogous to the above.
\end{proof}

Secondly, we will use the following well-known fact to give us a practical way of talking about names for reals. For the sake of completeness, we include a proof. For the rest of this section, let $\baseset = \omega$ when $\bP=\bM$, and $\baseset=2$ when $\bP=\bS$.

\begin{fact}\label{f.names}
Sacks and Miller forcing forcing have \emph{continuous reading of names for reals}: If $p\in \bP$, $\dot x$ is an $\bP$-name and $p\forces \dot x\in\omega^\omega$, then there is $\eta:\baseset^\omega\to\omega^\omega$ continuous and $q\leq p$ such that $q\forces \eta(\dot x_G)=\dot x$.
\end{fact}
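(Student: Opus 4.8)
The plan is to prove continuous reading of names by a fusion argument, building a tree of conditions that decides longer and longer initial segments of $\dot x$ along the branches of the generic tree. First I would recall the relevant notion of fusion: for $\bP = \bS$, a sequence $(p_n)$ with $p_{n+1} \le_n p_n$ (meaning $p_{n+1} \le p_n$ and the two conditions have the same first $n$ splitting levels) has a lower bound $\bigwedge_n p_n \in \bS$; similarly for $\bP = \bM$ with the analogous $\le_n$ using the first $n$ infinite-splitting nodes. In either case a condition $p$, viewed as a subtree of $A^{<\omega}$, has a canonical order isomorphism with the full tree $A^{<\omega}$ sending splitting nodes to splitting nodes, and this is what will let us transfer a continuous function defined on $[p]$ to one defined on $A^\omega$.

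The core construction proceeds as follows. Start with $p_0 = p$. Having built $p_n$, enumerate the $n$-th level splitting nodes of $p_n$; there are finitely many, say $s_1, \dots, s_{k}$, and below each $p_n \restriction s_i$ we may (by density, since $p_n \restriction s_i \forces \dot x \in \omega^\omega$) find an extension forcing a value for $\dot x(n)$, indeed forcing $\dot x \restriction (n+1)$ to equal some specific $t_i \in \omega^{n+1}$. We amalgamate these finitely many extensions back into a single condition $p_{n+1} \le_n p_n$ (this is where we use that $\bP$ is closed under taking a condition, splitting at a node, and gluing finitely many extensions above the immediate successors of that node). Recording the values $t_i$ gives, at stage $n$, a function from the $n$-th splitting level of $q := \bigwedge_n p_n$ into $\omega^{n+1}$, and these cohere because each $p_{n+1}$ refines $p_n$ on the relevant part. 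Passing to the limit, $q \forces$ ``$\dot x$ is the branch read off the generic by the resulting map,'' i.e.\ there is a continuous $\tilde\eta \colon [q] \to \omega^\omega$ with $q \forces \tilde\eta(\dot x_G) = \dot x$. Finally, precompose with the canonical homeomorphism $A^\omega \to [q]$ (induced by the tree isomorphism $A^{<\omega} \cong q$) to obtain $\eta \colon A^\omega \to \omega^\omega$ continuous with $q \forces \eta(\dot x_G) = \dot x$.

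The main obstacle — really the only point requiring care — is verifying that the fusion is legitimate for both forcings simultaneously and that the amalgamation step stays inside the forcing. For Sacks this is completely standard; for Miller one must be slightly careful because the "splitting" at a relevant node is into infinitely many immediate successors, and one glues back infinitely many extensions, so one checks that the resulting tree still has all non-endnodes leading to an infinite-splitting node (i.e.\ is a legitimate superperfect tree) and that $p_{n+1} \le_n p_n$ in the Miller sense. One also has to make the bookkeeping ensure that \emph{every} branch of $q$ eventually passes through splitting nodes of all levels, which is automatic from the definition of $\le_n$ and the fusion limit. I would state the fusion lemma for $\bP$ explicitly (or cite it), then carry out the construction uniformly, writing $A$-successors throughout so the two cases read identically, and remark at the end that the continuity of $\eta$ is immediate since $\eta(z) \restriction (n+1)$ depends only on $z \restriction m$ for $m$ large enough to reach the $n$-th splitting level of $q$.
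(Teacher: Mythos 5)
Your overall strategy is the same as the paper's: a fusion argument producing $q\leq p$ on which the values of $\dot x$ are decided locally along the tree, followed by reading off a continuous function. (The paper fuses over the levels $A^n$ and decides $\dot x(n)$ below each node of $q\cap A^n$, whereas you fuse over splitting levels with the standard $\le_n$ relations and decide $\dot x\restriction(n+1)$ below each $n$-th splitting node; this is an immaterial difference.)

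There is, however, one genuine error in your last step. Having built the continuous $\tilde\eta\colon[q]\to\omega^\omega$ with $q\forces\tilde\eta(\dot x_G)=\dot x$, you propose to obtain $\eta\colon A^\omega\to\omega^\omega$ by \emph{precomposing with the canonical homeomorphism} $h\colon A^\omega\to[q]$ induced by the tree isomorphism $A^{<\omega}\cong q$. This does not work: $\eta=\tilde\eta\circ h$ satisfies $\eta(\dot x_G)=\tilde\eta(h(\dot x_G))$, and $h(\dot x_G)$ is in general a \emph{different} branch of $q$ than $\dot x_G$ (the homeomorphism re-indexes branches by their splitting choices; it is not the identity on $[q]$), so $q\forces\eta(\dot x_G)=\dot x$ fails. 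What is needed is an \emph{extension} of $\tilde\eta$ from the closed set $[q]$ to all of $A^\omega$ agreeing with $\tilde\eta$ on $[q]$; for instance, compose $\tilde\eta$ with a continuous retraction $r\colon A^\omega\to[q]$ with $r\restriction[q]=\mathrm{id}$ (such a retraction exists for the body of any nonempty pruned tree on $A$), or equivalently extend the monotone map on $q$ defining $\tilde\eta$ to all of $A^{<\omega}$. This is exactly the (easy) extension step the paper invokes; with that correction your proof is complete.
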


Any continuous function $\eta\colon \baseset^{\omega} \rightarrow \omega^\omega$ arises from a monotone map between trees $\varphi\colon \baseset^{<\omega} \rightarrow \omega^{<\omega}$; in the notation of \cite[Definition~2.5,~p.~7]{kechris1995}, $\eta = \varphi^*$. So we can regard the countable object $\varphi$ as a `code' for $\eta$. To say that $\varphi$ gives rise to a total function is $\Pi^1_1$, whence absolute. We adopt the convention that 
if $\eta$ is coded by $\varphi$ in this sense, i.e., $\eta = \varphi^*$, then after moving to a forcing extension we still use $\eta$ to refer to the function coded by $\varphi$ there, i.e., $\varphi^*$ \emph{as interpreted in the current model}.
Without this convention the statement itself of Fact \ref{f.names} makes little sense.

For the proof of Fact~\ref{f.names}, we use the following terminology. 
Momentarily fix a tree $T \subseteq \baseset^{<\omega}$. 
Given $s \in \baseset^{<\omega}$, define
$(T)_s=\{ t \in T : s \subseteq t \}$. 
We call $t$ an \emph{immediate successor of $s$ (in $T$)} if $s \subseteq t$, $t\in T$, and $\lh(s) = \lh(t) -1$. 
Call $t \in T$ a \emph{full splitting node (in $T$)}  
if the set 
of  immediate successors of $t$ in $T$ has size $\baseset$; and for $n \in \omega\setminus\{0\}$ call $t$ an \emph{$n$-th full splitting node (in $T$)} if 
in addition  $\{s \in T \colon s \subseteq t\}$ contains exactly $n$ full splitting nodes.
Finally, we write $(T)^*_{\leq n}$ for the set of immediate successors of $n$-th full splitting nodes.

\begin{proof}[Proof of Fact \ref{f.names}.]
For $n\in\omega$, the set $D_n$ of $p\in\bP$ which decide a value for $\dot x (\check n)$ is dense and open.
We construct $q$ in a typical fusion argument: 
Let $p_0=p$ and inductively find $p_{n+1} \leq p_n$ such that 
 $(p_{n+1})^*_{\leq n+1}  = (p_n)^*_{\leq n+1}$ and whenever $s \in (p_{n+1})^*_{\leq n+1}$, then
 $(p_{n+1})_s \in D_n$.
Let $q$ be the greatest lower bound in $\bP$ of the sequence $p_0, p_1, \hdots$ (it exists as for each $k$, $(p_n)^*_{\leq k+1}$ is eventually constant in $n$).

For each $n \in \omega$ there are sequences $\{ s^n_0, s^n_1, \hdots \} \subseteq \baseset^{<\omega}$ and 
$\{ k^n_0, k^n_1, \hdots \}\subseteq \omega$ such that 
$\{ s^n_0, s^n_1, \hdots \} = (q)^*_{n+1}$ and
\[
q\forces \check{s}^n_i \subseteq \dot x_G \implies \dot x(\check n)=\check{k}^n_i.
\]
For $r\in[q]$, defining $\eta( r)( n) = k^n_i$ whenever $s^n_i \subseteq r$,
we have that  $\eta\colon[q]\rightarrow \omega^\omega$ is continuous and $q \forces \eta(\dot x_G)=\dot x$.
We can easily extend $\eta$ to a continuous function defined on all of $\baseset^\omega$.
\end{proof}

Lastly, we need a Ramsey-theoretic statement, Corollary \ref{c.Galvin} below, which in the Sacks case follows from the following theorem due to Galvin (see \cite[Theorem~19.7,~p.~145]{kechris1995}).
\begin{theorem}[Galvin's Ramsey theorem for Polish spaces]\label{t.Galvin}
Let $Y$ be a perfect Polish space, and suppose $$
[Y]^2=P_0\cup P_1
$$ 
is a partition of $[Y]^2$ into Baire measurable pieces. Then there is $C\subseteq Y$ perfect and $i\in\{0,1\}$ such that $[C]^2\subseteq P_i$.
\end{theorem}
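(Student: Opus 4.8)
The plan is to deduce this from two standard ingredients: Mycielski's theorem on comeager relations, and an elementary Ramsey-type fact about $2$-colourings of the tree $2^{<\omega}$. First I would reduce to the case $X = 2^\omega$: every perfect Polish space contains a homeomorphic copy of $2^\omega$, such a copy is a perfect subset, and a perfect subset of it is again perfect in $X$; restricting the partition to such a copy (and noting that the restricted pieces are still Baire measurable) it suffices to find the homogeneous perfect set inside a fixed copy of $2^\omega$. I would then symmetrise, setting $Q_i = \{(x,y) \in 2^\omega \times 2^\omega : x \neq y,\ \{x,y\} \in P_i\}$ for $i \in \{0,1\}$; these have the Baire property and cover $2^\omega \times 2^\omega$ off the diagonal.

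The core of the argument is to build, by recursion on the levels of $2^{<\omega}$, a Cantor scheme $(U_s)_{s \in 2^{<\omega}}$ of nonempty clopen sets --- with $U_{s0}, U_{s1}$ disjoint clopen subsets of $U_s$ and diameters tending to $0$ along branches --- together with a colouring $c \colon 2^{<\omega} \to \{0,1\}$, such that $Q_{c(s)}$ is comeager in $U_{s0} \times U_{s1}$ for every $s$. To choose $U_{s0}, U_{s1}, c(s)$: pick disjoint clopen $V, W \subseteq U_s$; since $Q_0$ has the Baire property, by Kuratowski--Ulam there is a sub-box $V' \times W'$ in which $Q_0$ is either comeager or meager, and since $V' \cap W' = \varnothing$ forces $Q_0 \cup Q_1 \supseteq V' \times W'$, in the latter case $Q_1$ is comeager there; set $U_{s0} = V'$, $U_{s1} = W'$ and $c(s)$ accordingly. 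As comeagerness in a box is inherited by sub-boxes, the recursion goes through. Interleaved with it I would run the Mycielski diagonalisation: for each node $s$ write the complement of $Q_{c(s)}$ in $U_{s0} \times U_{s1}$ as an increasing union of nowhere dense sets, and at each later stage shrink the boxes currently being defined so as to avoid the closures of more and more of these nowhere dense sets. Letting $\pi \colon 2^\omega \to 2^\omega$ be the continuous injection with $\pi(x)$ the unique point of $\bigcap_n U_{x \restriction n}$, this bookkeeping ensures that whenever $x \neq y$ branch at a node $u$, the actual pair $(\pi(x), \pi(y))$ lies in $Q_{c(u)}$, i.e. $\{\pi(x), \pi(y)\} \in P_{c(u)}$.

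Finally I would invoke the Ramsey fact: for any $c \colon 2^{<\omega} \to \{0,1\}$ there is a perfect subtree $T \subseteq 2^{<\omega}$ and a colour $i$ such that every splitting node of $T$ has colour $i$. This is elementary: if some node $u$ has all of its extensions of colour $j$, let $T$ be the full binary subtree rooted at $u$; otherwise every node has an extension of colour $0$, and one builds $T$ greedily so that all its splitting nodes have colour $0$. Applying this to the colouring produced above and setting $C = \pi([T])$, the set $C$ is perfect (a continuous injective image of the compact perfect set $[T]$), and any two distinct points $\pi(x), \pi(y)$ of $C$ branch at some splitting node $u$ of $T$, so $\{\pi(x), \pi(y)\} \in P_{c(u)} = P_i$; hence $[C]^2 \subseteq P_i$.

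The hard part will be the simultaneous recursion in the second paragraph: one has to run the Cantor-scheme construction, make each $c(s)$ well defined by a Baire-category choice at the right moment, and thread in the Mycielski bookkeeping so that \emph{individual} pairs $(\pi(x), \pi(y))$ --- not merely a comeager set of them --- end up coloured correctly. Everything around this is routine.
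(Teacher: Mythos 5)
Your core argument --- a Cantor scheme $(U_s)$ with a colour $c(s)$ decided at each splitting node by Baire-category localisation, the Mycielski-style shrinking so that \emph{individual} pairs $(\pi(x),\pi(y))$ (not merely a comeager set of them) land in $Q_{c(u)}$, and the elementary lemma that every $2$-colouring of $2^{<\omega}$ admits a perfect subtree whose splitting nodes are monochromatic --- is exactly the standard proof of Galvin's theorem; the paper itself does not prove the statement but cites Kechris (Theorem 19.7), whose proof runs along precisely these lines. All of those steps check out, including the Kuratowski--Ulam/localisation choice of $c(s)$, the observation that comeagerness in a box passes to sub-boxes (needed so that later shrinking does not destroy earlier decisions), and the two-case proof of the monochromatic-subtree lemma.

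The one step that fails as written is the opening reduction to $X=2^\omega$. A homeomorphic copy $K$ of $2^\omega$ inside a perfect Polish space is in general closed and nowhere dense (in $\mathbb{R}$, for instance, \emph{every} copy of $2^\omega$ is nowhere dense), and the Baire property does \emph{not} relativise to such subspaces: if $A\subseteq K$ is a Bernstein set of $K$, then $A$ is nowhere dense in $X$ and hence has the Baire property in $X$, but it fails to have the Baire property relative to $K$. The same applies to $Q_i\cap(K\times K)$, which is automatically meager in $X\times X$ but need not have the Baire property in $K\times K$. So your parenthetical claim that ``the restricted pieces are still Baire measurable'' is false, and the Kuratowski--Ulam step, which needs the Baire property of $Q_0$ \emph{relative to the copy}, is not justified. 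The repair is easy and is what the standard proof does: skip the reduction and run the whole construction directly in $X$, replacing the clopen sets $U_s$ by nonempty open sets with $\overline{U_{s0}},\overline{U_{s1}}$ disjoint subsets of $U_s$ and diameters tending to $0$. The localisation argument only uses that $Q_0$ has the Baire property in $X\times X$ (so that on any nonempty open box it is either meager or comeager on a nonempty sub-box), and the resulting map $\pi$ is still a homeomorphism of $2^\omega$ onto a Cantor subset of $X$. With that change your proof is complete.
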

\noindent

Given a binary relation $\mathcal R$, a set $A$ is called \emph{$\mathcal R$-complete} iff 
\[
(\forall x,y\in A)\ x\neq y\implies x\mathbin{\mathcal R} y.
\]
\begin{corollary}\label{c.Galvin}
Let $\mathcal R$ be an analytic relation on a standard Borel space $X$, let $\eta:\baseset^\omega\to X$ be a Borel function and let $p\in \bP$. Then there is  $q \in \bP$ such that $q \leq p$ and either $\eta([q])$ is $\mathcal R$-complete, or $\eta([q])$ is $\mathcal R$-discrete.
\end{corollary}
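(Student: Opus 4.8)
The plan is to reduce Corollary \ref{c.Galvin} to Galvin's Ramsey theorem (Theorem \ref{t.Galvin}) in the Sacks case, and to the analogue of Galvin's theorem for superperfect trees in the Miller case. First I would note that $\mathcal R$ is analytic, hence Baire measurable, and that by restricting to a perfect (resp.\ superperfect) subtree on which $\eta$ is continuous — which we may do by a standard fusion argument, shrinking $p$ if necessary — we may assume $\eta$ is continuous and injective on $[p]$. Then I would pull back the relation: define a partition $[[p]]^2 = P_0 \cup P_1$ by putting $\{r,r'\}$ into $P_1$ iff $\eta(r) \mathbin{\mathcal R} \eta(r')$ or $\eta(r') \mathbin{\mathcal R} \eta(r)$, and into $P_0$ otherwise. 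Since $\eta$ is continuous and $\mathcal R$ is analytic, $P_1$ has the Baire property (in the space $[[p]]^2$), so $P_0$ does too.

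Next, in the Sacks case, $[p]$ is a perfect Polish space, so Theorem \ref{t.Galvin} yields a perfect $C \subseteq [p]$ with $[C]^2 \subseteq P_i$ for some $i \in \{0,1\}$. Since $C$ is perfect, there is a perfect tree $q$ with $[q] = C$ (after intersecting with $p$ this is a Sacks condition $q \leq p$), and then $\eta([q])$ is $\mathcal R$-discrete if $i = 0$ and $\mathcal R$-complete if $i = 1$ — here the injectivity of $\eta$ on $[p]$ is what guarantees that distinct points of $\eta([q])$ come from distinct points of $[q]$, so the partition behaves correctly with respect to the requirement $x \neq y$. In the Miller case, the argument is identical once we invoke the Galvin-type theorem for Miller forcing (the one attributed in the Acknowledgements to Todor\v{c}evi\'c): any Baire-measurable $2$-colouring of $[[p]]^2$, for $p$ a superperfect tree, admits a superperfect $q \leq p$ with $[[q]]^2$ monochromatic. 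Applying it to the partition above gives the desired $q$.

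The main obstacle I anticipate is purely bookkeeping rather than conceptual: one must be careful that the colour classes really are Baire measurable \emph{as subsets of the symmetric square $[[p]]^2$} with its natural Polish topology, and that "$\eta([q])$ is $\mathcal R$-complete/discrete" is stated in terms of \emph{distinct} elements of the image — which forces us to arrange $\eta$ injective on $[p]$, or else to handle separately the (Baire-meagre, hence harmless after a further shrinking) set of pairs $\{r,r'\}$ with $\eta(r) = \eta(r')$. A secondary point worth spelling out is that intersecting the perfect/superperfect set $C$ produced by the Ramsey theorem with the tree $p$ still yields a condition in $\bP$ below $p$ whose body is $C$; this is immediate from the definition of the forcing but should be mentioned. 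Once these are in place the corollary follows directly, with no fusion needed beyond the initial reduction to continuous $\eta$.
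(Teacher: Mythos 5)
Your argument is essentially the paper's: symmetrize the relation, pull $\mathcal R$ back through $\eta$ to a Baire measurable partition of the unordered pairs from $[p]$, apply Galvin's theorem (respectively its Miller analogue due to Todor\v{c}evi\'c) and take $q$ with $[q]$ equal to the resulting monochromatic perfect (superperfect) set. That core is correct, and the preliminary pass to make $\eta$ continuous, while not needed (a Borel $\eta$ already makes the colour classes analytic, hence Baire measurable, in $[[p]]^2$), is harmless.

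The step that fails is the reduction to \emph{injective} $\eta$. You cannot in general shrink $p$ so that $\eta$ becomes injective on $[p]$ --- if $\eta$ is constant, every restriction is constant --- and for the same reason your fallback is also wrong: the set of pairs $\{r,r'\}$ with $\eta(r)=\eta(r')$ need not be meagre in $[[p]]^2$ (for constant $\eta$ it is everything). Fortunately injectivity is not needed, and you have the relevant implication backwards: the argument requires that distinct points of the image $\eta([q])$ come from distinct points of $[q]$, and this is automatic for any function (if $\eta(r)=u\neq v=\eta(r')$ then $r\neq r'$); injectivity would give the converse, which is irrelevant here. Thus if $[[q]]^2$ is monochromatic for the pulled-back colouring, then for any two distinct $u,v\in\eta([q])$ one may pick preimages, which are automatically distinct, and read off the relation; collapsing of points under $\eta$ only helps, and in the extreme case of constant $\eta$ the image is a singleton, vacuously both $\mathcal R$-complete and $\mathcal R$-discrete --- which is exactly the point of the paper's remark following the corollary. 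Dropping the injectivity reduction entirely leaves a correct proof identical to the paper's.
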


\begin{proof}
For $\bP = \bS$, assume $\mathcal R$ is symmetric and let $Y = [p]$ and
$$
P_0=\{\{x,y\}\in [Y]^2: \eta(x) \mathbin{\mathcal R}\eta(y)\},
$$
$P_1=[Y]^2\setminus P_0$. Let $C$ be given by Galvin's theorem and pick $q \in \bP$ such that $q \leq p$ and $[q]=C$. \
If $[C]^2\subseteq P_1$ then $\eta(C)$ is $\mathcal R$-discrete and if $[C]^2\subseteq P_0$ then $\eta(C)$ is $\mathcal R$-complete.

For $\bP=\bM$, as was pointed out by Stevo Todor\v{c}evi\'c, we may use  \cite[Corollary~5.68,~p.~121]{todorcevic2010} instead of Theorem \ref{t.Galvin} to derive the result.
\end{proof}

\begin{remark}
In the previous corollary, it is perfectly acceptable that $\eta$ is constant, say. In that case $\eta(2^\omega)$ is both $\mathcal R$-complete and $\mathcal R$-discrete at the same time.
\end{remark}

\begin{definition}\label{d.galvinwitness}
For $\mathcal R$ and $\eta$ as in Corollary \ref{c.Galvin}, call $q\in\bP$ a \emph{Galvin witness} for $\eta$ (and $\mathcal R$) if $\eta([q])$ is either $\mathcal R$-complete or $\eta([q])$ is $\mathcal R$-discrete.
\end{definition}

Note that $\eta([q])$ being $\mathcal R$-discrete is $\Pi^1_1[a]$ uniformly in $\eta$ and $q$, and $\eta([q])$ being $\mathcal R$-complete is $\Pi^1_2[a]$. 
In particular both are absolute for class models by Levy-Shoenfield, and thus so is the property of being a Galvin witness.

\medskip
Now we are ready to prove the main theorem of this section:
\begin{proof}[Proof of Theorem \ref{t.sacks}]
Let $\mathcal R \subseteq (\omega^\omega)^2$ be $\Sigma^1 _1[a]$;
the proof relativizes easily to the parameter $a$, so we suppress it below.

It suffices to produce a $\Sigma^1_2$ formula $\varphi$ which defines a maximal $\mathcal R$-discrete set in any $\bP$-generic extension of $L$, since if $\mathcal{A}$ is $\Sigma^1_2$ and maximal $\mathcal{R}$-discrete set, then $\mathcal{A}$ is in fact $\Delta^1_2$, since
$$  
x \not\in \mathcal{A} \iff ( \exists y \in \mathcal{A})\ x\mathbin{\mathcal{R}}y \wedge x\neq y.
$$

Below we identify $\bP \times C(\baseset^\omega,\omega^\omega) $ with a $\Pi^1_1$ subset of $\omega^\omega$, by identifying both $C(\baseset^\omega,\omega^\omega)$ and $\bP$ with subsets of $\omega^\omega$ (see the remark after Fact \ref{f.names}) and identifying $\omega^\omega$ and $(\omega^\omega)^2$ via some fixed effective bijection\footnote{For the case $\baseset=2$, we could alternatively use that the set of continuous functions $C(2^\omega, \omega^\omega)$ has an effective presentation as a Polish metric space, and so we can regard it as a $\Pi^0_2$ subset of $2^\omega$.}.

Working in $L$, fix an enumeration $\langle (p_\xi,\eta_\xi):\xi<\omega_1\rangle$ of $\bP\times C(\baseset^\omega,\omega^\omega)$ such that
$$
\xi < \delta \implies (p_\xi,\eta_\xi) <_L (p_{\delta},\eta_{\delta}).
$$
By recursion on $\omega_1$, we will define a sequence $\langle q_\xi:\xi<\omega_1\rangle$, such that the following are satisfied:
\begin{enumerate}[label=(\roman*),ref=\roman*]
\item\label{def.clause.zero}\label{def.clause.first} $q_\xi$ is a (not necessarily perfect) subtree of $p_\xi$.
\item \label{def.clause.one} If $p_\xi$ does not force
\begin{equation}\label{def.clause.equ.p.forces}
(\forall \delta<\xi)(\forall y\in [q_\delta])\ \eta_\xi(\dot x_G) \mathbin{\not\!\!\mathcal{R}} \eta_\delta(y)
\end{equation}
then $q_\xi = \emptyset$.
\item \label{def.clause.two} If $p_\xi$ does force \eqref{def.clause.equ.p.forces} and $q \in \bS$ is $\leq_L$-least such that $q\leq p_\xi$ and
\begin{equation}\label{eq.q}
(\forall x \in [q])(\forall \delta<\xi)(\forall y\in [q_\delta])\ \eta_\xi(x) \mathbin{\not\!\!\mathcal{R}} \eta_\delta(y),
\end{equation}
then 
\begin{enumerate}[label=(\ref{def.clause.two}.\alph*), ref=\ref{def.clause.two}.\alph*, align=left]
\item \label{def.subclause.a}  if $\eta_\xi([q])$ is $\mathcal R$-discrete, $q_\xi = q$;
\item \label{def.subclause.b} if $\eta_\xi([q])$ is \emph{not} $\mathcal R$-discrete, $q_\xi \subseteq q$ is the pruned subtree whose unique branch is the left-most branch of $q$.
\end{enumerate}
\item\label{def.clause.three}\label{def.clause.last} The set $\mathcal{A}^0=\{ (q_\xi, \eta_\xi) : \xi <\omega_1 \}$ is $\Sigma^1_2$.
\end{enumerate}
Above it is implicit in \eqref{def.clause.two} that $q$ exists; this follows since in case \eqref{def.clause.two}  the analytic set
$$
\{ x \in [p_\xi] \colon (\exists \delta<\xi)(\exists y\in [q_\delta])   \; \eta_\xi(x) \mathbin{\mathcal{R}} \eta_\delta(y)\}
$$
must be countable since $p_\xi$ forces \eqref{def.clause.equ.p.forces} and by the proof of Fact~\ref{f.complexityforces}.

Suppose for now that $\langle q_\xi:\xi<\omega_1\rangle$ satisfies \eqref{def.clause.first}--\eqref{def.clause.last} above. Then let $\varphi(y)$ be the $\Sigma^1_2$ formula
$$
( \exists q,\eta) (\exists x \in [q]) (q,\eta)  \in \mathcal{A}^0 \wedge  y = \eta(x).
$$
Clause \eqref{def.clause.two} ensures that $\varphi$ defines an $\mathcal R$-discrete set in any model. For maximality, suppose, seeking a contradiction, that
$$
p\forces (\exists x\in\omega^\omega)(\neg\varphi(x)\wedge (\forall y)(\varphi(y)\implies x\not\!\!\mathcal R\ y)).
$$
By Fact~\ref{f.names} we may assume (replacing $p$ with a stronger condition if necessary) that there is a total continuous function $\eta:\baseset^\omega\to\omega^\omega$ such that
\begin{equation}\label{eq.forces}%
p\forces \neg\varphi(\eta(\dot x_G))\wedge (\forall y \in\Omega^\omega)(\varphi(y)\implies \eta(\dot x_G)\not\!\!\mathcal R\ y).
\end{equation}
By Corollary \ref{c.Galvin}, we may assume that $p$ is a Galvin witness for $\eta$, since otherwise, we can again replace $p$ by a stronger condition. 

Let $\xi< \omega_1$ be such that $(p,\eta)=(p_\xi,\eta_\xi)$. Then $p_\xi$ forces \eqref{def.clause.equ.p.forces} and so clause \eqref{def.clause.two} applies. Let $q\leq p_\xi$ be as in clause \eqref{def.clause.two}.

If $\eta([q])$ is $\mathcal R$-discrete then $q_\xi=q$. Since $q\forces \dot x_G\in [q]$ it follows that $q\forces \varphi(\eta(\dot x_G))$, contradicting that $p\forces \neg\varphi(\eta(\dot x_G))$.

So it must be that $\eta([q])$ is not $\mathcal R$-discrete, and so clause \eqref{def.subclause.b} applies. Since $p$ is a Galvin witness for $\eta$, it follows that $\eta([p])$ is $\mathcal R$-complete. Let $z\in [q_\xi]$ be the unique branch through $q_\xi$. Then
$$
(\forall x\in [p])\ \eta(x)=\eta(z)\vee \eta(x)\ \!\mathcal R\ \!\eta(z),
$$
and since this is $\Pi^1_2[z]$, it follows by Shoenfield's absoluteness theorem that $p\forces \eta(\dot x_G)=\eta(\check z)\vee\eta(\dot x_G)\ \!\mathcal R\ \!\eta(\check z)$, contradicting (\ref{eq.forces}).

\medskip 

It is routine that $\langle q_\xi:\xi<\omega_1\rangle$ satisfying \eqref{def.clause.zero}--\eqref{def.clause.two} above can be found. In fact \eqref{def.clause.one} and \eqref{def.clause.two} determine the sequence $\langle q_\xi:\xi<\omega_1\rangle$ uniquely. So proving the following claim will finish the proof.

\begin{claim}
The set $\mathcal{A}^0=\{ (q_\xi,\eta_\xi) : \xi <\omega_1\}$ is $\Sigma^1_2$.
\end{claim}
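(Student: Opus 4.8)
The goal is to show that the recursively-defined set $\mathcal A^0 = \{(q_\xi,\eta_\xi):\xi<\omega_1\}$ is $\Sigma^1_2$. The standard way to do this — and the one I would follow — is to exhibit $\mathcal A^0$ as a set defined by a $\Sigma^1_2$ formula with the help of the canonical $\Sigma^1_2$ good wellordering $<_L$ of the reals in $L$, using condensation to localize the recursion. Concretely, $(q,\eta)\in\mathcal A^0$ should be equivalent to: there exist a real $z$ coding a countable transitive model $M$ (i.e. $z$ codes a wellfounded extensional relation whose transitive collapse is $M$) such that $M\models V=L$, $M\models$ ``$(q,\eta)=(q_\xi,\eta_\xi)$ for some $\xi$ in the recursion defined by clauses (i)--(iii)'', and $M$ is correct about the relevant facts. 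Since ``$z$ codes a wellfounded relation'' is $\Pi^1_1$ and the rest is arithmetic in $z$ (once we have the model as a set coded by a real), the whole statement is $\Sigma^1_2$. The converse — that if such an $M$ exists then $(q,\eta)$ really is some $(q_\xi,\eta_\xi)$ in the true recursion — will follow from condensation: $M$ embeds into some $L_\alpha$, and because clauses (i)--(iii) are \emph{absolute enough}, the recursion computed inside $M$ agrees with the true one below the ordinal height of $M$.

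The key technical point to verify is precisely this absoluteness. Clauses (i) and (iii) refer to: the forcing relation $p_\xi\forces\cdots$ for a $\Pi^1_1$-type statement (clause \ref{def.clause.one}), which by Fact \ref{f.complexityforces} is itself $\Pi^1_1$ and hence absolute between $L_\alpha$'s (for $\alpha$ admissible, or just using Mostowski absoluteness of $\Pi^1_1$ together with the effective perfect-set machinery); the $\leq_L$-least witness $q$ in \ref{def.clause.two}, which is absolute once the model is correct about $<_L$ and about which analytic sets are countable; the property ``$\eta_\xi([q])$ is $\mathcal R$-discrete'', which is $\Pi^1_1$ and absolute; and the property ``$\eta_\xi([q])$ is $\mathcal R$-complete'', which is $\Pi^1_2$ and absolute by Shoenfield. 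The ``$q$ exists'' remark in the main proof — that the relevant analytic set of bad $x\in[p_\xi]$ is countable when \ref{def.clause.one} fails — must also hold inside $M$, which is fine since countability of an analytic set is witnessed by an explicit enumeration living in $L_{\omega_1^a}$, absolutely. I would state a short sub-lemma: \emph{if $M$ is a countable transitive model of enough of $\mathrm{ZF}+V=L$ and $\alpha=\mathrm{Ord}\cap M$, then the sequence $\langle q_\xi:\xi<\alpha\rangle$ computed in $M$ is an initial segment of the true $\langle q_\xi:\xi<\omega_1\rangle$}, proved by induction on $\xi$ using the absoluteness items just listed.

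Given that sub-lemma, the $\Sigma^1_2$ definition of $\mathcal A^0$ writes itself: $(q,\eta)\in\mathcal A^0$ iff there is a countable transitive (coded) model $M\models \mathrm{ZF}^- + V=L$ with $(q,\eta)\in M$ such that $M\models$ ``$\exists\xi\ (q,\eta)=(q_\xi,\eta_\xi)$''. Completeness of this definition (every true pair is captured) uses the reflection/condensation fact that any particular $(q_\xi,\eta_\xi)$, being constructed by stage $\xi+1<\omega_1$, appears in some countable $L_\alpha\prec_{\Sigma_n} L_{\omega_2}$ (for suitable $n$), which is coded by a real in $L$ and hence in $V$. Soundness uses the sub-lemma. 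The statement ``$M$ is a coded wellfounded model'' is $\Pi^1_1$ in the code, the internal assertion is arithmetic in the code, so the matrix is $\Pi^1_1\wedge(\text{arithmetic})$, and prefixing $\exists z$ yields $\Sigma^1_2$, as required.

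**Main obstacle.** The delicate step is checking that the recursion clauses, in particular the interaction of the forcing relation in \ref{def.clause.one} with passing to a countable submodel, are genuinely absolute — one must make sure Fact \ref{f.complexityforces}'s $\Pi^1_1$ bound on $\{(p,a):p\forces\varphi(\dot x_G,\check a)\}$ holds \emph{within} the countable model and agrees with the real relation, and that the ``only countably many bad $x$'' side-condition transfers. This is where the $V=L$ assumption and the effective perfect set theorem (and the Mansfield--Solovay argument hidden in Fact \ref{f.complexityforces}) do the real work; everything else is bookkeeping with $<_L$ and condensation.
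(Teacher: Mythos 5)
Your proof is correct, but it takes a genuinely different route from the one in the paper. You define $\mathcal{A}^0$ by quantifying over reals coding countable wellfounded models of $V=L$ that compute the recursion internally, and then argue by condensation plus an absoluteness sub-lemma that the internal recursion is an initial segment of the true one; the wellfoundedness predicate supplies the $\Pi^1_1$ matrix and the existential quantifier over codes gives $\Sigma^1_2$. The paper instead stays at the level of reals: it quantifies over a pair of reals $\vec{x},\vec{y}$ coding the countable sets $\{(p_\delta,\eta_\delta):\delta<\xi\}$ and $\{q_\delta:\delta<\xi\}$ of predecessors, uses that $\leq_L$ is a \emph{strongly} $\Delta^1_2$ wellordering (so that ``$\vec{x}$ enumerates a $<_L$-initial segment'' is $\Sigma^1_2$, and $<_L$-minimization stays $\Sigma^1_2$ by closure under bounded quantification), and checks directly that each recursion clause is $\Sigma^1_2$ uniformly in these coded initial segments, invoking Fact \ref{f.complexityforces} for clause \ref{def.clause.one}. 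These are two standard implementations of the same Addison-style idea: yours makes the condensation machinery explicit (and so must re-verify, inside the countable model, that the $\Pi^1_1$ description of the forcing relation from Fact \ref{f.complexityforces} agrees with the model's internal forcing relation), while the paper's black-boxes all of that into the citation for the strongly $\Delta^1_2$ wellordering and only does step-by-step complexity bookkeeping. One small imprecision in your write-up: Shoenfield does not give absoluteness of the $\Pi^1_2$ statement ``$\eta_\xi([q])$ is $\mathcal R$-complete'' between $V$ and an arbitrary countable transitive model (only downward absoluteness holds there); fortunately this does not matter, because clauses \ref{def.subclause.a} and \ref{def.subclause.b} branch only on the $\Pi^1_1$, hence genuinely absolute, property of $\mathcal R$-discreteness --- $\mathcal R$-completeness enters only in the maximality argument, where absoluteness between class models suffices.
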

{\it Proof of claim}:
The set of sequences $\vec{x}=\langle (p^*_n,\eta^*_n): n \in \alpha\rangle$, for $\alpha \leq \omega$,  such that 
\begin{equation}\label{init}
(\exists \xi <\omega_1)\ \{(p^*_n,\eta^*_n): n \in \alpha\} = \{ (p_\delta,\eta_\delta):\delta < \xi \}.
\end{equation}
is $\Sigma^1_2$; since $\leq_L$ is a strongly $\Delta^1_2$ well-ordering of $\omega^\omega$ (see \cite{harrington1975} or \cite{t-weiss}) and $\bP\times C(\baseset^\omega,\omega^\omega)$ was identified with a $\Pi^1_1$ subset of $\omega^\omega$, this follows easily by the proof of \cite[Exercise~5A.1,~p.~287]{moschovakis2009descriptive}. 
Let $\Psi(\vec{x})$ be a $\Sigma^1_2$ formula equivalent to \eqref{init}.

Now assume $\Psi(\vec{x})$ holds and suppose $\vec{y} = \langle q^*_n : n\in\alpha\rangle$ 
enumerates the sequence $\{ q_\delta:\delta<\xi\}$ in such a way that for all $\delta<\xi$ and $n \in \alpha$ we have $p^*_n = p_\delta \iff q^*_n=q_\delta$.

Observe that \eqref{def.clause.one} and \eqref{def.clause.two} are $\Sigma^1_2$  uniformly in $(\vec{x},\vec{y},p_\xi,\eta_\xi,q_\xi)$: \eqref{def.clause.one} is $\Pi^1_1$ in these parameters %
by \ref{f.complexityforces}.
The property expressed in \eqref{eq.q} is $\Pi^1_1$ in $(\vec{x},\vec{y},\eta_\xi,q)$. Thus, that $q$ be minimal with that property is $\Sigma^1_2$ in these parameters, 
as $\leq_L$ is strongly $\Delta^1_2$ (by `closure under bounded quantification').
Clauses
\eqref{def.subclause.a} and  \eqref{def.subclause.b} are Boolean combinations of $\Sigma^1_1$ formulas in the parameters $(\eta_\xi,q_\xi,q)$ by the remarks following Definition \ref{d.galvinwitness}.
Thus, \eqref{def.clause.two} is easily seen to be $\Sigma^1_2$ in $(\vec{x},\vec{y},p_\xi,\eta_\xi,q_\xi)$.

So we may express the conjuction of \eqref{def.clause.one} and \eqref{def.clause.two} by a  $\Sigma^1_2$ formula 
\[
\Theta(q, p_\xi, \eta_\xi, \vec{x}, \vec{y}),
\]
i.e., for $\vec{x}$ and $\vec{y}$ as above, $\Theta(q, p_\xi, \eta_\xi, \vec{x}, \vec{y})$ holds if and only if $q = q_\xi$.

Thus $\vec y=\langle q^*_n  : n \in \alpha \rangle$, with $\alpha \leq \omega$, enumerates an initial segment of $\langle q_\xi : \xi <\omega_1 \rangle$ exactly if the following formula holds:
\begin{equation}\label{formula}
(\exists \vec{ x} )\;  \Psi(\vec{x}) \wedge \dom(\vec{x})=\dom(\vec{y}) \wedge [ (\forall n < \dom(\vec{x})) \; \Theta(q^*_n, p^*_n, \eta^*_n, \vec{x} \restriction n, \vec{y}\restriction n) ]
\end{equation}
where we abuse notation by writing
$\vec{x}\restriction n$ for 
\[
\langle (p^*_m, \eta^*_m ) : m < \dom(\vec{x}), (p^*_m, \eta^*_m ) \leq_L (p^*_n, \eta^*_n )  \rangle
\]
 (similarly for $\vec{y}\restriction n$). 
The formula \eqref{formula} is easily seen to be equivalent to a $\Sigma^1_2$ formula; it follows that $\mathcal A^0$ is $\Sigma^1_2$.
\hfill {\tiny Claim.}$\dashv$

\end{proof}
We also get the following effective corollary for $\mathbf{\Sigma}^1_1$ relations:
\begin{corollary}
Under the hypothesis of Theorem \ref{t.sacks}, if $a' \in L[x]$ and $\mathcal R$ is a $\Sigma^1_1[a']$ binary relation on a effectively presented Polish space $X$, then in $L[x]$ there is a $\Delta^1_2[a']$ maximal $\mathcal R$-discrete set in $X$.
\end{corollary}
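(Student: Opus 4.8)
The plan is to reduce this effective corollary to Theorem \ref{t.sacks} by relativizing to a ground model parameter and then re-proving the complexity calculation with that parameter. First I would observe that the difficulty is purely one of \emph{which} model the parameter $a'$ lives in: Theorem \ref{t.sacks} assumes the relation is $\Sigma^1_1[a]$ for some $a \in \omega^\omega$ lying in the ground model $L[a]$, whereas here $a'$ is merely a real in the Sacks or Miller extension $L[x]$. So the first step is to pass from $a'$ to a ground-model code for it: since $\bP \in \{\bS, \bM\}$ has continuous reading of names (Fact \ref{f.names}), and $a' \in L[x]$, there is a condition $p$ in the generic filter, a continuous $\eta \colon A^\omega \to \omega^\omega$ coded in $L$, and a $\bP$-name such that $\eta(x) = a'$; more simply, $a'$ is $\bP$-generic-definable from a ground model code. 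The cleanest route is to note that $L[x] = L[a'][x]$ is false in general, so instead I would work with a real $a \in L$ coding the name for $a'$ together with $p$, and apply the \emph{fully relativized} version of Theorem \ref{t.sacks}.

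The key point, which the paper flags with the sentence ``A suitably relativized version of the previous theorem applies more generally,'' is that Theorem \ref{t.sacks} is proved with a parameter $a \in L$ in the ground model, producing a $\Delta^1_2[a]$ maximal $\mathcal R$-discrete set. So the second step is: take $a \in L$ such that (i) $a$ codes the relevant continuous function $\varphi$ and the condition $p$ witnessing $\eta^\varphi(\dot x_G) = \check{a'}$-name, and (ii) $L[a] = L$ (which holds automatically since $a \in L$, so $\omega_1^{L[a]} = \omega_1^L$ and the $\Delta^1_2$ wellordering machinery is unaffected). Then $\mathcal R$, being $\Sigma^1_1[a']$ with $a' = \eta(x)$ and $\eta$ coded by $a \in L$, becomes $\Sigma^1_1[a,x]$; but forcing with $\bP$ over $L[a] = L$ and invoking the complexity-of-forcing computations (Fact \ref{f.complexityforces}) exactly as in the proof of Theorem \ref{t.sacks}, the recursion now runs inside $L$ enumerating $\bP \times C(A^\omega, \omega^\omega)$, and one checks that ``$\eta_\xi(x) \mathbin{\mathcal R} \eta_\delta(y)$'' is still $\Sigma^1_1$ in the finitely many parameters at hand — now including $a$ — so the whole construction goes through verbatim, yielding a $\Sigma^1_2[a]$, hence $\Delta^1_2[a]$, maximal $\mathcal R$-discrete set. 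Finally, since $a \in L$ and the $\Delta^1_2$ wellordering of $L$ lets us quantify over ground-model reals in a $\Delta^1_2$ way, one absorbs the parameter $a$: a $\Sigma^1_2[a]$ definition with $a \in L$ can be rewritten as $(\exists a)(a \in L \wedge a = (\text{the unique real satisfying }\ldots) \wedge \ldots)$, but more directly, since $a'$ itself is definable from $x$ via the ground-model code, the resulting set is $\Delta^1_2[a']$: indeed $\mathcal R$ is $\Sigma^1_1[a']$ by hypothesis, and the maximal discrete set we built is definable over $L[x]$ using only the $\Delta^1_2$ wellordering of $L$ and the parameter $a'$ (through which $a$ and the code are recovered).

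I expect the main obstacle to be bookkeeping the parameters correctly so that the final definition is genuinely $\Delta^1_2[a']$ and not $\Delta^1_2[a]$ for some larger $a \in L$. The subtlety is that the recursion of Theorem \ref{t.sacks} is carried out in $L$, and its output is naturally $\Sigma^1_2$ with a parameter coding the ground-model data; one must verify that this parameter can be taken to be (recursive in) $a'$, using that $a'$ itself encodes — via its $\bP$-name, which lives in $L$ and is coded by a real $\leq_L a'$ in an appropriate sense — everything needed. Concretely, one argues that $\omega_1^{L[a']} = \omega_1^L = \omega_1^{L[x]}$ (true since $\bP$ is proper and preserves $\omega_1$, and $a' \in L[x]$), so $\mathcal R$ being $\Sigma^1_1[a']$ means the effective perfect set theorem and Fact \ref{f.complexityforces} apply relative to $a'$, and the sequence $\langle q_\xi : \xi < \omega_1 \rangle$ can be defined by a recursion that is $\Sigma^1_2[a']$ when one uses the $\Delta^1_2$ wellordering $\leq_L$ restricted appropriately. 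Once that is pinned down, the maximality and discreteness arguments are identical to those in the proof of Theorem \ref{t.sacks}, now with $\mathcal R$-completeness being $\Pi^1_2[a']$ and $\mathcal R$-discreteness being $\Pi^1_1[a']$, both absolute, so nothing new is needed there.
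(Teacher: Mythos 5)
Your proposal misses the one idea that makes this corollary easy, and the route you substitute for it has a real gap. The paper's proof is a two-case split using the fact that a Sacks (or Miller) real is of \emph{minimal degree} over $L$: either $a'\in L$, in which case Theorem \ref{t.sacks} applies verbatim with $a=a'$; or $a'\notin L$, in which case $L[a']=L[x]$, so one simply runs the construction of Theorem \ref{t.sacks} \emph{inside} $L[a']$ (using the strongly $\Delta^1_2[a']$ wellordering of $L[a']$), where it already produces a maximal $\mathcal R$-discrete set in the ground model $L[a']=L[x]$ --- no forcing over $L[a']$ is needed, since the final model is $L[a']$ itself. You never invoke minimality, and without it there is no reason the object you build should be recoverable from $a'$ alone.

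The concrete gap in your route is this: you propose to replace $a'$ by $\eta(\dot x_G)$ for a ground-model code $\eta$ and then run the recursion of Theorem \ref{t.sacks} in $L$, claiming the construction ``goes through verbatim.'' It does not. The recursion's clauses --- that $\eta_\xi([q])$ be $\mathcal R$-discrete, that $q$ be the $\leq_L$-least condition satisfying \eqref{eq.q}, and that $p$ be a Galvin witness for $\eta$ and $\mathcal R$ --- are all evaluated in the ground model and presuppose that $\mathcal R$ is a fixed $\Sigma^1_1$ relation there. Once the parameter of $\mathcal R$ is $\eta(\dot x_G)$, these become statements whose truth value varies with the generic branch, so the sequence $\langle q_\xi\rangle$ is no longer well-defined in $L$, Fact \ref{f.complexityforces} no longer applies in the form used, and Corollary \ref{c.Galvin} gives no dichotomy. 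Your final ``parameter absorption'' step is also unjustified: you assert $a$ is ``coded by a real $\leq_L a'$,'' which is meaningless when $a'\notin L$, and the set you would build is definable from $x$ and a ground-model code, not from $a'$, unless $x\in L[a']$ --- which is exactly the minimality fact you omitted. (Also, $L[a'][x]=L[x]$ is trivially true, contrary to your parenthetical remark.)
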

\begin{proof}
We may assume $X=\omega^\omega$. 
If $a'\in L$, then Theorem \ref{t.sacks} gives a $\Delta^1_2[a']$ formula $\varphi$ defining a  maximal $\mathcal R$-discrete set in $L[x]$. 
If $a'\notin L$ then $L[x]=L[a']$ (as $x$ is of minimal degree, see \cite{halbeisen2012combinatorial}), and  Theorem \ref{t.sacks} provides a $\Delta^1_2[a']$ formula $\varphi$ defining a  maximal $\mathcal R$-discrete set in $L[a']$---and, incidentally, its Sacks (resp.\ Miller) extension---starting from the strongly $\Delta^1_2[a']$ well-ordering of $L[a']$.
\end{proof}

\medskip

It's simple to axiomatize a class of forcings for which the above proof goes through.
A forcing $\bP$ is called \emph{strongly arboreal} \cite{ikegami2010} if and only if its conditions are perfect trees on $\baseset$ where $\baseset\subseteq\omega$, ordered by inclusion, and for any $p \in \bP$ and $s \in p$ also $(p)_s \in \bP$.
Any extension of $V$ by a $(\bP,V)$-generic filter $G$ is generated by the single `generic' real $\bigcap_{p\in G} [p]$; its name we denote by $\dot x^{\bP}_G$. 
A real is called $(\bP,V)$-generic over if and only if it arises in this way.
For example, Sacks, Miller, Mathias and Laver are (equivalent to) strongly arboreal forcings (see e.g.\ \cite{brendle2005silver} and \cite{halbeisen2012combinatorial}).  
\begin{theorem}\label{t.arboreal}
Let $\bP$ be a strongly arboreal forcing such that:
\begin{enumerate}[label=(\Alph*),align=left]
\item \label{P.names} %
$\bP$ has Borel reading of names (in the sense of \cite[Proposition~2.3.1,~p.~29]{zapletal2008forcing}).
\item \label{P.forces} If $\varphi(x,y)$ is a $\Pi^1_1$ predicate then
$
\{(p,a)\in\bP\times\omega^\omega:p\forces \varphi(\dot x^{\bP}_G,\check a)\}
$
is $\Delta^1_2$.
\item\label{galvin.gen} The analogue of Galvin's theorem holds for $\bP$: For $\mathcal R$ as in Theorem \ref{t.sacks}, a Borel function $\eta\colon \baseset^\omega\rightarrow \omega^\omega$ and $p\in \bP$, there is $q\in \bP$, $q \leq p$ such that $q$ is a Galvin witness for $\mathcal R$ and $\eta$.
\end{enumerate}
Then the analogue of Theorem \ref{t.sacks} holds when $x$ is a $(\bP,L[a])$-generic real.
\end{theorem}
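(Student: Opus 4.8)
The plan is to re-run the proof of Theorem~\ref{t.sacks} essentially verbatim, noting that the only properties of $\bS$ and $\bM$ that it used are the three facts which hypotheses \ref{P.names}, \ref{P.forces} and \ref{galvin.gen} abstract. Going through that proof one sees: Fact~\ref{f.names} was used only to extract, below a given condition, a continuous---hence certainly Borel---function $\eta$ so that $\eta(\dot x^{\bP}_G)$ realizes a prescribed name for a real, and \ref{P.names} supplies exactly this; Fact~\ref{f.complexityforces} was used only in the complexity bookkeeping for the Claim that $\mathcal A^0$ is $\Sigma^1_2$, where a $\Delta^1_2$ bound in place of the $\Pi^1_1$ bound is harmless, and \ref{P.forces} supplies that; and Corollary~\ref{c.Galvin} was used only to pass from a condition to a Galvin witness, which is \ref{galvin.gen}. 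Everything else in the proof of Theorem~\ref{t.sacks} is generic to arboreal forcings: conditions are perfect (pruned) trees on $A\subseteq\omega$, so $\bP$ together with the codes for Borel maps $A^\omega\to\omega^\omega$ form $\Pi^1_1$ subsets of $\omega^\omega$; the generic is the unique branch $\bigcap_{p\in G}[p]$, named by $\dot x^{\bP}_G$, and its canonical name is read by the identity; $\bP$ is a definable forcing lying in $L[a]$; and $L[a]$ carries a strongly $\Delta^1_2[a]$ wellordering $\leq_{L[a]}$ of its reals with a good coding of proper initial segments.

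Concretely, working in $L[a]$ I would fix a $\leq_{L[a]}$-increasing enumeration $\langle(p_\xi,\eta_\xi):\xi<\omega_1^{L[a]}\rangle$ of $\bP$ paired with codes for Borel functions $A^\omega\to\omega^\omega$, and define $\langle q_\xi:\xi<\omega_1^{L[a]}\rangle$ by the recursion \ref{def.clause.one}--\ref{def.subclause.b} word for word. The sole existence fact used inside the recursion is that in case \ref{def.clause.two} some $q\leq p_\xi$ with the property \eqref{eq.q} exists, and this follows from \ref{P.names}: $p_\xi$ forces the generic to avoid the ground-model analytic set $\{x\in[p_\xi]:(\exists\delta<\xi)(\exists y\in[q_\delta])\ \eta_\xi(x)\mathbin{\mathcal R}\eta_\delta(y)\}$, and Borel reading of names for an arboreal forcing then yields a condition $q\leq p_\xi$ whose body is disjoint from it. Put $\mathcal A^0=\{(q_\xi,\eta_\xi):\xi<\omega_1^{L[a]}\}$ and let $\varphi(y)$ be ``$(\exists q,\eta)(\exists x\in[q])\ (q,\eta)\in\mathcal A^0\wedge y=\eta(x)$''. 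By clause~\ref{def.clause.two} the formula $\varphi$ defines an $\mathcal R$-discrete set in any model; its maximality in $L[a]$ is the classical $V=L$ verification (each $y_0\in L[a]$ occurs in the enumeration via a constant function and is therefore absorbed at its stage, using nothing beyond $\leq_{L[a]}$); and its maximality in $L[a][x]$ is the Galvin-witness argument unchanged---a condition $p$ forcing $\varphi$ non-maximal yields, by \ref{P.names}, a Borel $\eta$ satisfying \eqref{eq.forces}; by \ref{galvin.gen} we may assume $p$ is a Galvin witness for $\eta$; and writing $(p,\eta)=(p_\delta,\eta_\delta)$, either \ref{def.subclause.a} applied, so $q_\delta\forces\varphi(\eta(\dot x^{\bP}_G))$ against \eqref{eq.forces}, or $\eta([p])$ is $\mathcal R$-complete, so with $z$ the unique branch of $q_\delta$ the statement ``$(\forall x\in[p])\ \eta(x)=\eta(z)\vee\eta(x)\mathbin{\mathcal R}\eta(z)$''---being $\Pi^1_2$ in $z$ and the code for $\eta$---is forced by $p$ by Shoenfield absoluteness, again against \eqref{eq.forces}. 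Since the recursion is determined by $\leq_{L[a]}$ (and by absoluteness of the forcing relation and of $\Pi^1_1$- and $\Pi^1_2$-statements), $\mathcal A^0$ and $\varphi$ are the same in $L[a]$ and $L[a][x]$; and a $\Sigma^1_2$ maximal $\mathcal R$-discrete set is automatically $\Delta^1_2[a]$ via ``$y\notin\mathcal A\iff(\exists z\in\mathcal A)\ z\mathbin{\mathcal R}y\wedge z\neq y$''.

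The step I expect to require the most care---and the one place where \ref{P.forces}, rather than \ref{P.names} or \ref{galvin.gen}, is needed---is re-checking that $\mathcal A^0$ is $\Sigma^1_2$, i.e.\ that replacing the $\Pi^1_1$ bound of Fact~\ref{f.complexityforces} by the $\Delta^1_2$ bound of \ref{P.forces} does not push the final complexity above $\Sigma^1_2$. The bookkeeping mirrors the Claim in the proof of Theorem~\ref{t.sacks}: the set of codes for $\leq_{L[a]}$-initial segments of the enumeration is $\Sigma^1_2[a]$ (from $\leq_{L[a]}$ being strongly $\Delta^1_2[a]$ and $\bP$ together with the codes being a $\Pi^1_1$ set); clause \ref{def.clause.one} is $\Delta^1_2$ in its parameters by \ref{P.forces}; \eqref{eq.q} is $\Pi^1_1$, so the $\leq_{L[a]}$-minimality of $q$ demanded in \ref{def.clause.two} is $\Sigma^1_2$; and \ref{def.subclause.a}, \ref{def.subclause.b} are Boolean combinations of $\Sigma^1_1$ formulas by the remarks after Definition~\ref{d.galvinwitness}. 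Hence the conjunction of \ref{def.clause.one} and \ref{def.clause.two} is given by a $\Sigma^1_2$ formula $\Theta(q,p_\xi,\eta_\xi,\vec x,\vec y)$, and the formula
\[
(\exists\vec x)\ \Psi(\vec x)\wedge [\,(\forall n\in\alpha)\ \Theta(q^*_n,p^*_n,\eta^*_n,\vec x\restriction n,\vec y\restriction n)\,]
\]
asserting that $\vec y$ enumerates an initial segment of $\langle q_\xi\rangle$ is again $\Sigma^1_2$, since $\Sigma^1_2$ is closed under $\exists^{\omega^\omega}$ and under conjunction with a $\Delta^1_2$ matrix and with an arithmetically indexed countable family of $\Sigma^1_2$ clauses. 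This closes the argument; the remainder is transcription of the proof of Theorem~\ref{t.sacks}.
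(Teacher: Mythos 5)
Your proposal follows the paper's proof of Theorem \ref{t.arboreal} essentially verbatim: the paper, too, simply reruns the proof of Theorem \ref{t.sacks}, and the only two adjustments it records are exactly the ones you make --- the enumeration must now have its second coordinate range over the ($\Pi^1_1$) set of codes for total Borel functions rather than continuous ones, and the $\Delta^1_2$ bound of \ref{P.forces}, in place of the $\Pi^1_1$ bound of Fact \ref{f.complexityforces}, still keeps clauses \ref{def.clause.one} and \ref{def.clause.two}, hence $\mathcal A^0$, at the level $\Sigma^1_2$. In structure and substance your argument is the intended one.

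The one place where your added detail misfires is the justification for the existence of the condition $q$ in clause \ref{def.clause.two}. You claim it follows from \ref{P.names}: since $p_\xi$ forces the generic to avoid the analytic set $B=\{x\in[p_\xi]:(\exists\delta<\xi)(\exists y\in[q_\delta])\ \eta_\xi(x)\mathbin{\mathcal R}\eta_\delta(y)\}$ (analytic via a real coding the earlier stages), ``Borel reading of names yields $q\le p_\xi$ with $[q]\cap B=\emptyset$.'' Borel reading of names does no such thing: it lets one realize a name for a real as a Borel function of the generic below some condition, but it says nothing about shrinking a condition so that its \emph{entire body} --- which contains plenty of non-generic ground-model branches --- avoids an analytic set that the generic is forced to avoid. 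In the Sacks case this step used the perfect set theorem to conclude that $B\cap[p_\xi]$ is countable and then a further pruning past a countable set; in the Miller case it used the $K_\sigma$ dichotomy. For a general arboreal forcing the clean hypothesis that delivers it is precisely the one the paper mentions immediately after the theorem as a possible replacement for \ref{P.forces}: below any $p\in\bP\cap M$ there is $q\le p$ all of whose branches are $(\bP,M)$-generic. (To be fair, the paper's own proof also passes over this point with ``the proof goes through as before''; but if you are going to supply a justification, it should be via that genericity property or a forcing-specific fusion/dichotomy argument, not via \ref{P.names}.)
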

We mention without proof that \ref{P.forces} can be replaced by: for all countable transitive $M$ and $p\in\bP\cap M$ there is $q\leq p$ s.t.\ 
any $r \in [q]$ is a $(\bP,M)$-generic.
\begin{proof}[Proof of Theorem \ref{t.arboreal}]
One difference to the proof of Theorem \ref{t.sacks} is how we obtain the enumeration $\langle p_\xi, \eta_\xi : \xi<\omega_1\rangle$ at the beginning. 
The second coordinate now has to enumerate all codes for total Borel functions; the set of such codes is $\Pi^1_1$ (to see this, observe that if $f$ is $\Delta^1_1[a]$ then $f(x)$ is $\Delta^1_1[x,a]$; now use \cite[Corollary~4.19,~p.~53]{mansfield1985recursive}), so the proof goes through as before. 
It remains to notice that when even just requiring \ref{P.forces}, clauses \eqref{def.clause.one} and \eqref{def.clause.two} remain $\Sigma^1_2$.
\end{proof}

\section{A co-analytic mof in the Miller and Sacks extensions}\label{s.mof}

\begin{theorem}\label{t.pi11mof}
If $x$ is a Miller or Sacks real over $L[a]$, then
$$
L[a][x]\models \text{\emph{``there is a $\Pi^1_1[a]$ mof in $P(2^\omega)$''}}.
$$
\end{theorem}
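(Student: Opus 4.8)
The plan is to deduce Theorem~\ref{t.pi11mof} from Theorem~\ref{t.sacks} (equivalently Theorem~\ref{t.discrete}) together with the known structure theory of mofs developed in \cite{Fischer2010}. The relation $\mathcal R$ to feed into Theorem~\ref{t.sacks} is non-orthogonality: let $\mu \mathbin{\mathcal R} \nu \iff \mu \not\perp \nu$ on the effectively presented Polish space $P(2^\omega)$. By the Lebesgue decomposition theorem and a standard computation (carried out in \cite{Fischer2010}), $\mathcal R$ is $\Sigma^1_1$, since $\mu \not\perp \nu$ iff there exist Borel witnesses to a common absolutely continuous part, which can be expressed existentially over reals coding Radon--Nikodym derivatives. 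An $\mathcal R$-discrete set is then exactly an orthogonal family, and a maximal $\mathcal R$-discrete set is exactly a mof. So Theorem~\ref{t.sacks} already hands us a $\Delta^1_2[a]$ mof in $L[a][x]$; the only remaining task is to improve the complexity from $\Delta^1_2$ to $\Pi^1_1$.

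The improvement is where I expect to invoke the machinery of \cite{Fischer2010} rather than reprove it. The key point there is that orthogonality of a family $\mathcal A$ can be \emph{certified} by a single real (a ``code'' assigning to each pair of measures in $\mathcal A$ a Borel set separating them), and that in a model where the mof arises from a $\Delta^1_2$ wellordering with good condensation properties one can arrange the certifying data to be absorbed into the definition so that membership in the mof becomes $\Pi^1_1$. Concretely, I would go back to the construction in the proof of Theorem~\ref{t.sacks}: the mof is $\{\eta(x) : (q,\eta)\in\mathcal A^0,\ x\in[q]\}$ where $\mathcal A^0$ is $\Sigma^1_2$ and built by recursion along $<_L$. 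Following \cite{Fischer2010}, I would enrich each stage of the recursion so that $q_\xi$ simultaneously codes (via a $\Sigma^0_2$ or $\Delta^1_1$ gadget) the orthogonality witnesses between $\eta_\xi([q_\xi])$ and all earlier $\eta_\delta([q_\delta])$; since at stage $\xi$ only countably many earlier measures are non-orthogonal candidates and in fact the construction makes $\eta_\xi([q_\xi])$ orthogonal to everything earlier, such witnesses exist and, crucially, can be found $\Delta^1_2$-definably, hence pinned down inside $L_{\omega_1^{x,a}}[x,a]$ by effective perfect-set-type arguments as in Fact~\ref{f.complexityforces}. With the witnesses carried along, ``$\mu$ is in the mof'' becomes: $\mu$ appears as some $\eta(x)$ and is witnessed-orthogonal to everything built before it, and everything is built using only reals in the relevant $L$-level — a $\Pi^1_1$ condition by the usual ``all witnesses in $L_{\omega_1^{\mu,a}}$'' trick combined with \cite[Corollary~4.19]{mansfield1985recursive}.

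Thus the steps, in order, are: (1) verify $\mathcal R = \not\perp$ is $\Sigma^1_1$ on $P(2^\omega)$ (cite \cite{Fischer2010}); (2) observe maximal $\mathcal R$-discrete $=$ mof, so Theorem~\ref{t.sacks} gives a $\Delta^1_2[a]$ mof in $L[a][x]$; (3) redo the recursion of Theorem~\ref{t.sacks}'s proof with orthogonality-witnesses appended to each $q_\xi$, using the ccc-below property to guarantee the witnesses exist and the effective perfect set theorem to confine them to the right $L$-level; (4) read off that membership in the resulting mof is $\Pi^1_1[a]$, using that ``$\eta$ total'', ``$q$ a Galvin witness'', and ``all relevant witnessing reals lie in $L_{\omega_1^{z,a}}[z,a]$'' are all $\Pi^1_1$, and that a $\Sigma^1_2$-set-of-codes together with a $\Pi^1_1$ ``all codes constructed'' clause collapses to $\Pi^1_1$ exactly as in \cite{Fischer2010}, \cite{tornquist2013}.

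The main obstacle is step~(3)--(4): getting the orthogonality witnesses to be simultaneously (a) definable enough to be reconstructed inside the $\Pi^1_1$ definition and (b) genuinely present at every stage without blowing up the complexity of the recursion past $\Sigma^1_2$. This is precisely the technical heart of \cite{Fischer2010}; the new content here is only that the base construction now runs over an \emph{arbitrary} $\Sigma^1_1$ relation in a Sacks/Miller extension rather than in $L$ itself, so one must check the ``witness-absorption'' argument of \cite{Fischer2010} is insensitive to passing from $L$ to $L[a][x]$ — which it is, because $x$ is of minimal degree and the Sacks/Miller generic adds no new $\omega_1$-sequences of the relevant kind, so the effective bounds $\omega_1^{z,a}$ behave exactly as in the ground model. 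I would state this reduction cleanly and refer to \cite{Fischer2010} for the witness-coding lemma, spelling out only the points where the ambient model has changed.
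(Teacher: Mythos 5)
Your overall architecture agrees with the paper's: the proof is Theorem~\ref{t.sacks} applied to the $\Sigma^1_1$ relation $\mu\not\perp\nu$ (so a maximal discrete set is a mof), followed by a complexity-reduction step based on the coding method of \cite{Fischer2010}. Steps (1) and (2) are exactly what the paper does.

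However, steps (3)--(4) contain a genuine gap: you propose to re-enter the recursion and append \emph{orthogonality} witnesses to each $q_\xi$, but orthogonality is not what needs witnessing. ``$\mathcal A$ is an orthogonal family'' is already $\Pi^1_1$ once membership is, since $\perp$ is $\Pi^1_1$; and in any case at stage $\xi$ the new measures must be orthogonal to the \emph{uncountably} many measures in $\bigcup_{\delta<\xi}\eta_\delta([q_\delta])$, so pairwise separating Borel sets cannot be packed into a single real per stage. What actually has to be absorbed is the existential witness to $\Sigma^1_2$-\emph{membership} (in effect, the countable initial segment $\vec x$ of the construction certifying $(q,\eta)\in\mathcal A^0$), and there is no reason this witness lies in $L_{\omega_1^{\mu,a}}[\mu,a]$, so the effective perfect-set/``all witnesses in the right $L$-level'' confinement you invoke does not apply. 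The paper avoids all of this with a clean, model-independent Lemma~\ref{l.code}: if there is a $\Sigma^1_2[a]$ mof then there is a $\Pi^1_1[a]$ mof. One writes $\mu\in\mathcal A\iff(\exists y)R(\mu,y)$ with $R$ a $\Pi^1_1$ uniformizing relation, and replaces each atomless $\mu\in\mathcal A$ by the equivalent measure $F(\mu,\,g^{-1}(\mu)\oplus y)$, which codes into its own branching structure both the identity of $\mu$ and the witness $y$; membership in the new family is then a universally quantified $\Pi^1_1$ condition over the decoded data, with no re-entry into the recursion, no $L$-level arguments, and no sensitivity to whether the ambient model is $L$ or $L[a][x]$. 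If you replace your steps (3)--(4) by this lemma (taking care of the atomless/Dirac split as in the paper), the proof closes.
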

This follows immediately from Theorem \ref{t.discrete} together with the following lemma.
 
\begin{lemma}\label{l.code}
If there is a $\Sigma^1_2[a]$ mof in $P(2^\omega)$, then there is a $\Pi^1_1[a]$ mof.
\end{lemma}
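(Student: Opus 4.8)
The plan is to re-code the given $\Sigma^1_2[a]$ mof $\mathcal A$ into a $\Pi^1_1[a]$ mof $\mathcal B$, following the blueprint of the $V=L$ construction of a $\Pi^1_1$ mof in \cite{Fischer2010}. There, one builds a mof recursively along $\omega_1$, and at each stage one exploits the $\Delta^1_2$-good wellordering of $L$ both to enumerate the tasks that secure maximality and to make every choice canonical, so that membership in the final family becomes verifiable inside a countable well-founded model and hence $\Pi^1_1$. The only structural ingredient of $L$ that we do not have at our disposal is this wellordering; the point of the proof is that a $\Sigma^1_2[a]$ \emph{mof} supplies an adequate replacement. By Shoenfield absoluteness the predicate ``$\mu\in\mathcal A$'' reflects to countable transitive models, so $\mathcal A=\bigcup_{\xi<\omega_1}\mathcal A_\xi$ is an increasing union of sets $\mathcal A_\xi$ which are Borel uniformly in a code for $\xi$ together with a fixed parameter in $L[a]$ (the Shoenfield tree of $\mathcal A$). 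This stratification plays the role of the $L$-hierarchy. Crucially we shall use that $\mathcal A$ is \emph{maximal}, not merely orthogonal: this is what will force $\mathcal B$ to be maximal.

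The steps are as follows. First, to each $\mu\in\mathcal A$ attach a canonical certificate $y_\mu$: a real coding (an isomorphic copy of) the least level $L_{\gamma(\mu)}[a,\mu]$ that satisfies a fixed finite fragment of $\mathrm{ZFC}$ together with ``$\mu\in\mathcal A$'', taken to be the $<_{L[a,\mu]}$-least such real. As in the $\Pi^1_1$-coding arguments of \cite{Fischer2010} (and the theory of canonical $\Pi^1_1$ codes over $L[a,\mu]$ behind them), the relation ``$\mu\in\mathcal A$ and $y=y_\mu$'' is $\Pi^1_1[a]$ in $(\mu,y)$ — well-foundedness of the coded model is $\Pi^1_1$, its internal theory is arithmetic in $y$, and Mostowski absoluteness guarantees the model's verdict on ``$\mu\in\mathcal A$'' is correct. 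Second, recursively along $\omega_1$ build $\mathcal B=\{\nu_\mu:\mu\in\mathcal A\}$, where $\nu_\mu$ is produced from the pair $(\mu,y_\mu)$ by the measure-coding apparatus of \cite{Fischer2010} so that $\mu\ll\nu_\mu$, so that $(\mu,y_\mu)$ is recoverable from $\nu_\mu$ by a Borel function, and so that orthogonality is inherited: $\nu_\mu\perp\nu_{\mu'}$ whenever $\mu\perp\mu'$. Third, verify that $\mathcal B$ is a mof. Orthogonality: if $\nu_\mu\not\perp\nu_{\mu'}$ then $\mu\not\perp\mu'$, so $\mu=\mu'$ by orthogonality of $\mathcal A$, whence $y_\mu=y_{\mu'}$ and $\nu_\mu=\nu_{\mu'}$. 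Maximality: given any $\tau\in P(2^\omega)$, maximality of $\mathcal A$ yields $\mu\in\mathcal A$ with $\tau\not\perp\mu$, and since $\mu\ll\nu_\mu$ a set witnessing $\tau\perp\nu_\mu$ would also witness $\tau\perp\mu$ — contradiction, so $\tau\not\perp\nu_\mu\in\mathcal B$. Finally, $\mathcal B$ is $\Pi^1_1[a]$: ``$\nu\in\mathcal B$'' is equivalent to ``the Borel decoding of $\nu$ returns a pair $(\mu,y)$ with $\nu$ the coded measure of $(\mu,y)$ and with $y=y_\mu$ a valid certificate for $\mu\in\mathcal A$'', and the last conjunct is $\Pi^1_1[a]$ by the first step while everything else is Borel.

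The main obstacle is the second step, the measure-coding: one must arrange $(\mu,y)\mapsto\nu_\mu$ so that the three requirements hold simultaneously, and they pull against one another. Domination $\mu\ll\nu_\mu$ (which is essentially forced, since any part of $\nu_\mu$ orthogonal to $\mu$ would have to be orthogonal to \emph{all} of $\mathcal A$, impossible) confines $\nu_\mu$ to the absolute-continuity class of $\mu$; yet $\nu_\mu$ must visibly ``carry'' the independent datum $y_\mu$, and $\mu$ itself must be re-extractable from $\nu_\mu$ alone for the definability computation. Reconciling these is precisely the delicate part of the $\Pi^1_1$-mof technology of \cite{Fischer2010}, and the proof will adapt it essentially verbatim; the genuinely new point requiring care is that the certificates $y_\mu$ must now be built over $L[a,\mu]$ rather than inside a ground model satisfying $V=L$, so one must check that ``being a certificate'' stays $\Pi^1_1[a]$ — this is where $\Sigma^1_2[a]$-absoluteness of ``$\mu\in\mathcal A$'' and the $\Pi^1_1$ analysis of canonical codes over $L[a,\mu]$ are used. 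Note that the lemma is a statement of $\mathrm{ZFC}$ and makes no mention of forcing; combined with Theorem~\ref{t.discrete} it yields Theorem~\ref{t.pi11mof}.
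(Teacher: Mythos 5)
Your overall architecture is the right one and matches the paper's: replace each $\mu$ in the $\Sigma^1_2[a]$ mof $\mathcal A$ by an equivalent measure that visibly carries a $\Pi^1_1$-checkable certificate for ``$\mu\in\mathcal A$'', using the measure-coding function $F(\mu,z)\sim\mu$ with $G(F(\mu,z))=z$ from \cite{Fischer2010}, and packing a Borel code for $\mu$ itself into the string $z$ so that everything is recoverable from the new measure alone. Your verification of orthogonality and maximality of $\mathcal B$ is correct as stated. The difference is in how the certificate is produced, and this is where your write-up has a real gap. You take $y_\mu$ to be the ``$<_{L[a,\mu]}$-least real coding the least level $L_{\gamma(\mu)}[a,\mu]$ satisfying $\mu\in\mathcal A$'' and assert that ``$y=y_\mu$'' is $\Pi^1_1[a]$. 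The set of \emph{all} codes for that level is indeed $\Pi^1_1[a,\mu]$ (well-foundedness plus arithmetic conditions), but a level has many codes, not all of them in $L[a,\mu]$, and the naive rendering of ``$y$ is $<_{L[a,\mu]}$-least such that $\dots$'' is $(\forall z)(z<_{L[a,\mu]}y\rightarrow z\notin W_\mu)$, which is prima facie $\Pi^1_2$, not $\Pi^1_1$. This is not a cosmetic issue: if the certificate is not literally unique for each $\mu$, then $\mathcal B$ contains several pairwise \emph{equivalent} measures $F(\mu,g^{-1}(\mu)\oplus y)$ for the various admissible $y$, and orthogonality of $\mathcal B$ fails. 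Making a canonical choice of code $\Pi^1_1$-expressible requires the self-referential master-code apparatus, which you neither carry out nor precisely cite.

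The paper avoids this entirely with one application of Kond\^o's $\Pi^1_1$ uniformization: write $\mu\in\mathcal A\iff(\exists y)\,R(\mu,y)$ with $R$ a $\Pi^1_1$ \emph{functional} relation; the unique witness $y$ is then the certificate, and ``$R(\mu,y)$'' is already $\Pi^1_1$ with no appeal to $L[a,\mu]$, Shoenfield trees, or least codes. I would strongly recommend replacing your first step by this. Two smaller points: your parenthetical claim that $\mu\ll\nu_\mu$ is ``forced'' because a part of $\nu_\mu$ orthogonal to $\mu$ would be orthogonal to all of $\mathcal A$ is a non sequitur (such a part could well be compatible with other members of $\mathcal A$) --- what you actually need, and what $F$ delivers, is $\nu_\mu\sim\mu$ by construction; and the coding map $G$ is only defined on \emph{atomless} measures, so you must first arrange that $\mathcal A\cap P_c(2^\omega)$ is maximal among atomless measures and reinstate the Dirac measures at the end, a step your proposal omits.
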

\begin{proof}
We suppress the parameter $a$ below. 

The proof is based on a slight simplification of the coding method from \cite{Fischer2010}. Let $P_c(2^\omega)$ denote the set of \emph{atomless} Borel probability measures on $2^\omega$, i.e., $\mu \in P(2^\omega)$ such that $\mu(\{x\})=0$ for any $x$. This set is $\Pi^0_2$ as a subset of $P(2^\omega)$, see \cite[Lemma 2.1]{Fischer2010}.
Given $\mu \in P_c(2^\omega)$, 
let $y$ be the left-most branch of the tree
$$
\{t\in 2^{<\omega}:\mu(N_t) > 0\},
$$
where $N_s=\{ x \in 2^\omega : s \subseteq x \}$ is the basic open neighborhood determined by $s \in \omega^n$.
Let $n(0), n(1), \hdots$ enumerate the infinite set of $n$ such that $\mu(N_{y\upharpoonright n\mathbin{{}^\frown} 0})>0$ and $\mu(N_{y\upharpoonright n\mathbin{{}^\frown} 1})>0$ (here we use that $\mu$ is atomless) and define $G(\mu) \in 2^\omega$ by
\[
 G(\mu)(i)=\begin{cases}
             0 &\text{if }\mu (N_{y\upharpoonright n(i)\mathbin{{}^\frown} 0}) \geq \mu (N_{y\upharpoonright n(i)\mathbin{{}^\frown} 1}) \\
            1 &\text{otherwise.}
           \end{cases}
\]

We say $G(\mu)$ is ``coded'' by $\mu$. As in \cite{Fischer2010}, we can find a $\Delta^1_1$ coding function $F\colon P_c(2^\omega)\times 2^\omega\to P_c(2^\omega)$ such that for all $\mu \in P_c(2^\omega)$ and $y\in 2^\omega$, $F(\mu, y)$ is absolutely equivalent to $\mu$ and codes $y$, that is, \
$G(F(\mu,y))=y$.

\medskip

Now let $\mathcal{A}$ be a $\Sigma^1_2$ mof in $P(2^\omega)$. 
By subtracting from each measure in $\mathcal A$ a measure concentrating on a countable set and re-normalizing the remaining non-zero measures, we may assume $\mathcal A\subseteq P_c(2^\omega)$ (here we use the ccc-below property) and that $\mathcal A$ is maximal among orthogonal families of atomless measures. 
Let $R$ be $\Pi^1_1$ such that $\mu \in \mathcal{A} \iff (\exists y) \;  R(\mu, y)$. By $\Pi^1_1$ uniformization, we can assume $R$ is a functional relation, i.e.
 \[
 (\forall x \in \dom( R)) \; 
   (\exists! y)\; R(x,y).
\]
   Fix a $\Sigma^0_1$ bijection $(x,y)\mapsto x \oplus y$ from $(2^\omega)^2$ to $2^\omega$,
   and let $x \mapsto (x)_i$, for $i \in \{0,1\}$ be the pair of maps such that
   for all $z\in2^\omega$, $z = (z)_0 \oplus (z)_1$ (i.e., the components of the inverse of our bijection). Let $g:2^\omega\to P_c(2^\omega)$ be a $\Delta^1_1$ bijection.

Define $\mathcal A'\subseteq P_c(2^\omega)$ by letting $\mu' \in \mathcal{A}'$ just in case $\mu' \in P_c(2^\omega)$ and
\begin{multline}\label{theothermof}
(\forall z, \mu, y) \; \big[ z=G(\mu')  \wedge \mu=g((z)_0) \wedge y=(z)_1 \big] \implies\\
 \big[ \mu'= F(\mu, z) \wedge R(\mu, y)  \big].
\end{multline}
Clearly $\mathcal A'$ is $\Pi^1_1$ and moreover $\mathcal A'$ is an orthogonal family of atomless measures which is maximal among the atomless measures
since
$$\mathcal A' = \{F(\mu, g^{-1}(\mu) \oplus y) : \mu\in \mathcal A,  y \in 2^\omega, R(\mu, y) \}.$$

By enlarging $\mathcal{A}'$ to contain all Dirac measures (i.e., probability measures concentrating on a single point), we obtain a $\Pi^1_1$ mof.
\end{proof}

\section{No co-analytic mofs in the Mathias extension}\label{sec.mathias}\label{s.mathias}

The purpose of this section is to complement Theorem \ref{t.pi11mof} by showing that its conclusion fails when $r$ is a Mathias real over $L$.

\begin{theorem}\label{t.mathiasmof}
If all $\Sigma^1_2[a]$ (equivalently, all $\Delta^1_2[a]$) sets of reals are  
Ramsey  there is no $\Sigma^1_2[a]$ mof.
In particular, if $r$ is a Mathias real over $L[a]$, then
$$
L[a][r]\models \text{\emph{``there is {\bf no} $\Sigma^1_2[a]$ mof''}}.
$$
\end{theorem}

In the process of proving Theorem~\ref{t.mathiasmof} we will also obtain a new proof that there are no analytic mofs, see the end of this section.

\medskip

The proof of Theorem \ref{t.mathiasmof} requires multiple steps. We start by defining a way of assigning to each $x\in [\omega]^\omega$ a product measure on $2^{\omega}$. Let $f_x:\omega\to\omega$ denote the unique increasing function such that $x=f_x[\omega]$. Then define a sequence $\alpha^x\in [\frac 1 4,\frac 3 4]^\omega$ by
$$
\alpha^x_n=\begin{cases}
\frac 1 4+\frac 1 {2\sqrt{f^{-1}_x(n)+1}} & n\in x,\\
\frac 1 4 & n\notin x,
\end{cases}
$$
and define $\mu^x\in P(2^\omega)$ by
$$
\mu^x=\prod_{n\in\omega} (\alpha^x_n\delta_0+(1-\alpha^x_n)\delta_1),
$$
where $\delta_i\in P(\{0,1\})$ is the Dirac measure concentrated at $i\in\{0,1\}$. For $x,y\in [\omega]^\omega$, let
$$
\rho(x,y)=\sum_{n\in\omega}(\alpha^x_n-\alpha^y_n)^2.
$$
Note that if $z\subseteq y\subseteq x$ then $\rho(y,x)\leq\rho(z,x)$.

The intention behind the definition of $\alpha^x$ is to be able to use Kakutani's theorem on equivalence and orthogonality of product measures. Specifically, \cite[Corollary~1,~p.~222]{kakutani1948} gives
\begin{equation}\label{eq.kakutani}
\mu^x\sim \mu^y\iff \rho(x,y)<\infty
\end{equation}
and
$$
\mu^x\perp \mu^y\iff \rho(x,y)=\infty.
$$
Let then
$$
\mathcal F=\{g\in [\omega]^\omega: \rho(g,\omega)<\infty\},
$$
and define a binary operation $\cdot$ on $[\omega]^\omega$ by
$$
x\cdot y=f_y\circ f_x[\omega].
$$
We can think of the operation $x\cdot y$ as follows: $f_y$ identifies $\omega$ and $y$, and $x\cdot y$ is the copy of $x$ inside of $y$ under this identification.

\begin{prop}\label{p.monoid}\ 

$(i)$ For all $x,y,z\in [\omega]^\omega$
$$
\sqrt{\rho(x,y)}\leq\sqrt{\rho(x,z)}+\sqrt{\rho(z,y)}
$$
holds, even if $\rho(x,y)$ is infinite. In particular, $\sqrt\rho$ is finite on $\mathcal F$ and defines a complete metric on $\mathcal F$, inducing a Polish topology on $\mathcal F$. In this topology, $\mathcal F$ is a perfect Polish space.

$(ii)$ The operation $\cdot$ is associative and makes $[\omega]^\omega$ a monoid with the unit being $\omega$.

$(iii)$ For all $g\in\mathcal F$ and $x\in [\omega]^{\omega}$ we have $\rho(g\cdot x,x)=\rho(g,\omega)$ and $\mu^{g\cdot x}\sim\mu^x$. It follows that $\mathcal F$ is closed under the operation $\cdot$, and so is a monoid with unit $\omega$.

$(iv)$ If $|x\triangle y|<\infty$ then $\rho(x,y)<\infty$.

$(v)$ For any $g\in\mathcal F$ and $k\in\omega$, it holds that $g\cdot(\omega\setminus\{k\})\in\mathcal F$, and
$$
\lim_{k\to\infty}\rho(g\cdot(\omega\setminus \{k\}),g)=0.
$$

$(vi)$ For any $x\in [\omega]^\omega$, the equivalence relation $\sim^x$, defined in $[x]^\omega$ by $z\sim^x y\iff \mu^z\sim\mu^y$, has meagre, dense classes in the Polish topology on $[x]^\omega$.
\end{prop}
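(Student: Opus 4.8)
The guiding observation is that $\sqrt{\rho(x,y)}$ is the $\ell^2$-distance between the sequences $\alpha^x=(\alpha^x_n)_n$ and $\alpha^y=(\alpha^y_n)_n$ in $\R^\omega$; note $\alpha^\omega_n=\frac14+\frac1{2\sqrt{n+1}}$, so $g\in\mathcal F$ iff $\alpha^g-\alpha^\omega\in\ell^2$. For $(i)$, the triangle inequality is Minkowski's inequality for $\alpha^x-\alpha^z$ and $\alpha^z-\alpha^y$, and it is trivial when $\rho(x,y)=\infty$ since a sum of two $\ell^2$-sequences lies in $\ell^2$; since $n\in x\iff\alpha^x_n>\frac14$, $\sqrt\rho$ separates points, and the remaining metric axioms and finiteness on $\mathcal F$ follow. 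Then $g\mapsto\alpha^g-\alpha^\omega$ isometrically embeds $(\mathcal F,\sqrt\rho)$ into $\ell^2$, and I would verify the image is closed: if $\alpha^{g_k}-\alpha^\omega\to\beta$ in $\ell^2$ then each coordinate converges, the coordinate range $\{\frac14\}\cup\{\frac14+\frac1{2\sqrt{m+1}}:m\in\omega\}$ is a closed subset of $\R$, and $\ell^2$-convergence prevents the enumeration positions $f_{g_k}^{-1}(n)$ from tending to $\infty$, so $\beta+\alpha^\omega=\alpha^g$ for an infinite $g\in\mathcal F$. Completeness follows, separability is inherited from $\ell^2$, so $\mathcal F$ is Polish; that it has no isolated points will come from the estimate in $(iv)$.

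For $(ii)$, $f_{x\cdot y}=f_y\circ f_x$, so associativity of $\cdot$ is associativity of composition and $f_\omega=\mathrm{id}$ makes $\omega$ a two-sided unit. For $(iii)$, using $f_{g\cdot x}=f_x\circ f_g$ one checks coordinate-by-coordinate that for each $i$ and $n=f_x(i)$ one has $\alpha^{g\cdot x}_n-\alpha^x_n=\alpha^g_i-\alpha^\omega_i$ (treating $i\in g$ and $i\notin g$ separately), while $\alpha^{g\cdot x}_n=\alpha^x_n$ for $n\notin x$; summing over the bijection $i\mapsto f_x(i)$ of $\omega$ onto $x$ yields $\rho(g\cdot x,x)=\rho(g,\omega)$, hence $\mu^{g\cdot x}\sim\mu^x$ by \eqref{eq.kakutani}. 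Consequently, for $g,h\in\mathcal F$, $\sqrt{\rho(g\cdot h,\omega)}\le\sqrt{\rho(g\cdot h,h)}+\sqrt{\rho(h,\omega)}=\sqrt{\rho(g,\omega)}+\sqrt{\rho(h,\omega)}<\infty$, so $\mathcal F$ is a submonoid.

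For $(iv)$, by the triangle inequality it is enough to treat $y=x\setminus\{x(i_0)\}$: then $\alpha^y$ and $\alpha^x$ differ only at $x(i_0)$ and, with positions shifted by one, at the $n\in x$ above $x(i_0)$, so $\rho(x,y)=\frac1{4(i_0+1)}+\sum_{i>i_0}\bigl(\frac1{2\sqrt i}-\frac1{2\sqrt{i+1}}\bigr)^2<\infty$, and as $i_0\to\infty$ this tends to $0$, which together with $x\setminus\{x(i_0)\}\in\mathcal F\setminus\{x\}$ shows $\mathcal F$ has no isolated points, finishing $(i)$. For $(v)$, $g\cdot(\omega\setminus\{k\})\in\mathcal F$ by $(iii)$ and $\omega\setminus\{k\}\in\mathcal F$. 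For the limit one first notes that if $h\cap N=h'\cap N$ then $g\cdot h$ and $g\cdot h'$ agree on $\{0,\dots,N-1\}$, since a coordinate $m<N$ of $g\cdot h$ comes from a position $<|h\cap N|$, on which $f_h=f_{h'}$. With $N=k$ this gives that $g\cdot(\omega\setminus\{k\})$ agrees with $g=g\cdot\omega$ below $k$, so
\[
\rho(g\cdot(\omega\setminus\{k\}),g)=\sum_{m\ge k}\bigl(\alpha^{g\cdot(\omega\setminus\{k\})}_m-\alpha^g_m\bigr)^2\le 2\sum_{m\ge k}\bigl(\alpha^{g\cdot(\omega\setminus\{k\})}_m-\alpha^\omega_m\bigr)^2+2\sum_{m\ge k}\bigl(\alpha^g_m-\alpha^\omega_m\bigr)^2 .
\]
The last sum is a tail of $\rho(g,\omega)<\infty$; and $\rho(g\cdot(\omega\setminus\{k\}),\omega)\to\rho(g,\omega)$, because it is $\ge\sum_{m<k}(\alpha^g_m-\alpha^\omega_m)^2\to\rho(g,\omega)$ and, by $(iii)$ and $\rho(\omega\setminus\{k\},\omega)\to0$ (from $(iv)$), is $\le\bigl(\sqrt{\rho(g,\omega)}+\sqrt{\rho(\omega\setminus\{k\},\omega)}\bigr)^2\to\rho(g,\omega)$; subtracting off the coordinates below $k$ then forces the first sum to $0$ too. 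Hence $\rho(g\cdot(\omega\setminus\{k\}),g)\to0$.

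Finally, for $(vi)$: by \eqref{eq.kakutani}, $z\sim^x y\iff\rho(z,y)<\infty$. \emph{Density}: given a basic open $U\subseteq[x]^\omega$ (a finite membership condition with parameters in $x$) and $z\in[x]^\omega$, applying to $z$ those finitely many prescribed changes gives $z'\in U\cap[x]^\omega$ with $|z\triangle z'|<\infty$, so $z'\sim^x z$ by $(iv)$, and every class meets $U$. \emph{Meagreness}: the class of $z$ is $\bigcup_{N\in\omega}\{y\in[x]^\omega:\rho(z,y)\le N\}$; each $y\mapsto\alpha^y_n$ is locally constant on $[x]^\omega$, so $y\mapsto\rho(z,y)$ is lower semicontinuous and each of these sets is closed; and each has empty interior because inside any basic open $U$ one can pick an infinite $y\subseteq x$ meeting the conditions of $U$ and either almost disjoint from $z$ (possible when $x\setminus z$ is infinite) or a sparse subset of $z$ (in the remaining case), in either situation arranging $\sum_{i:\,f_z(i)\notin y}\frac1{i+1}=\infty$, whence $\rho(z,y)\ge\sum_{i:\,f_z(i)\notin y}\frac1{4(i+1)}=\infty$. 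So each set is nowhere dense and the class is meagre. The step I expect to be the main obstacle is this empty-interior argument, i.e.\ producing inside an arbitrary basic open subset of $[x]^\omega$ an infinite $y\subseteq x$ with $\mu^y\perp\mu^z$, the awkward case being $z$ cofinite in $x$ (e.g.\ $z=x$), where $y$ must be sparse inside $z$ yet still contain the finitely many prescribed elements; a secondary point is the closedness of the image in $\ell^2$ in $(i)$, verifying that $\ell^2$-limits of the $\alpha^{g_k}-\alpha^\omega$ remain of the form $\alpha^g-\alpha^\omega$.
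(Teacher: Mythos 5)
Your proposal is correct; the skeleton (the isometric embedding $g\mapsto\alpha^g-\alpha^\omega$ into $\ell^2$, Kakutani via $\rho$, the coordinate identity $\alpha^{g\cdot x}_{f_x(i)}-\alpha^x_{f_x(i)}=\alpha^g_i-\alpha^\omega_i$ for $(iii)$) matches the paper, but several sub-arguments go by genuinely different routes. For $(iv)$ the paper does not reduce to single-element removal: it uses the monotonicity $z\subseteq y\subseteq x\Rightarrow\rho(y,x)\le\rho(z,x)$ together with a one-line claim that $\omega\setminus k\in\mathcal F$ (a direct estimate on $\sum_{n\ge k}(\tfrac1{\sqrt{n+1}}-\tfrac1{\sqrt{n+1-k}})^2$) and the identity $z\setminus k=(\omega\setminus k_0)\cdot z$; your explicit computation of $\rho(x,x\setminus\{f_x(i_0)\})$ is an acceptable substitute and has the side benefit of giving perfectness of $\mathcal F$ directly, whereas the paper derives perfectness from $(v)$. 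For $(v)$ your squeeze argument on $\rho(g\cdot(\omega\setminus\{k\}),\omega)$ is more laborious than the paper's observation that the summands for $n\ge k$ coincide with a tail of the convergent series $\rho(g\cdot(\omega\setminus\{0\}),g)$, but it is fully rigorous. The most substantial divergence is in $(vi)$: the paper avoids your ``main obstacle'' entirely by noting that $\sim^x$ is $F_\sigma$, that all classes are dense by $(iv)$, and that there are at least two classes (exhibiting one $y$ with $f_x^{-1}(y)$ co-non-summable); hence the complement of any class is a dense $G_\delta$ and every class is meagre. Your hands-on nowhere-density argument also works --- and the case distinction you flag as awkward is unnecessary, since taking $y=s\cup(\text{a sufficiently sparse infinite subset of }x\setminus n)$ makes $\{i:f_z(i)\notin y\}$ have divergent reciprocal sum for \emph{every} $z\in[x]^\omega$, giving $\rho(z,y)\ge\tfrac14\sum_{i:f_z(i)\notin y}\tfrac1{i+1}=\infty$ uniformly --- but the paper's Baire-category trick buys you $(vi)$ with no construction at all once density and non-triviality are known.
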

\begin{proof}
$(i)$ Note that $\sqrt{\rho(x,y)}=\|\alpha^x-\alpha^y\|_2$, where $\|\cdot\|_2$ is the 2-norm. The norm inequality
$$
\|\alpha^x-\alpha^y\|_2\leq \|\alpha^x-\alpha^z\|_2+\|\alpha^z-\alpha^y\|_2
$$
holds in the strong sense that if the left hand side is infinite, then so is the right hand side (use that $\ell_2(\omega)$ is closed under addition). This establishes the inequality in $(i)$. The map $g\mapsto \alpha^g-\alpha^\omega$ is then an isometric embedding of $\mathcal F$ into the Hilbert space $\ell_2(\omega)$. It is straight-forward to check that the image under $g\mapsto\alpha^g-\alpha^\omega$ is closed in $\ell_2(\omega)$. Finally, $(v)$ implies that $\mathcal F$ has no isolated points.

$(ii)$ follows immediately from the definitions. $(iii)$ is follows easily from $(i)$, eq. (\ref{eq.kakutani}), and the definition of $\rho$. For $(iv)$ and $(v)$ we need:

\medskip

{\bf Claim:} $\omega\setminus k\in\mathcal F$ for any $k\in\omega$.

\medskip

{\it Proof of Claim}:
$$
\sum_{n\geq k} (\alpha^\omega_n-\alpha^{\omega\setminus k}_n)^2=\sum_{n\geq k}\left(\frac 1 {2\sqrt {n+1}}-\frac 1 {2\sqrt {n+1-k}}\right)^2,
$$
and the right hand side above converges since, using a small amount of calculus, we have $(n-k)^{-\frac 1 2}-n^{-\frac 1 2}\leq n^{-\frac 3 2}$ for $n>k$ sufficiently large. \hfill {\tiny Claim.}$\dashv$

\medskip

$(iv)$ Suppose $|x\triangle y|<\infty$ and let $z=x\cup y$. Let $k$ be such that $z\setminus k\subseteq x\cap y$ and $k_0=|z\cap k|$. Then 
$$
\rho(z,x),\rho(z,y)\leq\rho(z,x\cap y)\leq \rho(z,z\setminus k)=\rho(z,(\omega\setminus k_0)\cdot z)<\infty,
$$
with the last inequality following from the previous claim and $(iii)$.

$(v)$ Since $\omega\setminus \{k\}\supseteq\omega\setminus (k+1)$ the claim gives $\omega\setminus \{k\}\in\mathcal F$ for all $k\in\omega$. Now
$$\label{eq.sums}
\rho(g\cdot (\omega\setminus\{k\}),g)=\sum_{n\geq k}(\alpha^{g\cdot (\omega\setminus\{k\})}_n-\alpha_n^g)^2=\sum_{n\geq k} (\alpha^{g\cdot (\omega\setminus\{0\})}_n-\alpha_n^g)^2,
$$
and the last sum tends to $0$ as $k\to\infty$ since $\rho(g\cdot(\omega\setminus\{0\}),g)<\infty$.

$(vi)$ By eq. (\ref{eq.kakutani}) the relation $\sim^x$ is $F_\sigma$. $(iv)$ immediately gives that all $\sim^x$ classes are dense in $[x]^\omega$. On the other hand, it is easy to check that if $y\in [x]^\omega$ and the complement of $f^{-1}_x(y)$ is not in the summable ideal, then $\rho(y,x)=\infty$. So $\sim^x$ has at least two (necessarily dense) classes. It follows that the complement of any $\sim^x$ class is a dense $G_\delta$ set in $[x]^\omega$, whence each $\sim^x$ class is meagre in $[x]^\omega$.
\end{proof}

\begin{remark}
$(v)$ in the previous proposition intends to say that multiplication on the right in the monoid $\mathcal F$ has at least \emph{some} amount of continuity at $\omega$ (the identity). By contrast, $(iii)$ shows that left multiplication in $\mathcal F$ is continuous at $\omega$. We do not know if right multiplication is actually continuous at the identity, but it seems unlikely.
\end{remark}

Let $X$ be a Polish space. Recall that the equivalence relation $F_2^X$ on $X^\omega$ is defined by
$$
\vec{x}\ \! F_2^X\ \! \vec{y}\iff \{\vec x_n:n\in\omega\}=\{\vec y_n:n\in\omega\}.
$$

\begin{lemma}\label{l.hjorth}
Let $\vartheta: [\omega]^\omega\to X^\omega$ be a continuous function (w.r.t.\ the Polish topologies), and suppose $\vartheta$ is $(\mathcal F,F_2^X)$-equivariant, that is,
$$
(\forall g\in\mathcal F)(\forall x\in [\omega]^\omega)\ \vartheta(g\cdot x)\sm F_2^X\sm\vartheta(x).
$$
Then there is a non-empty open set $U_0$ such that $\vartheta_0\upharpoonrighttrict U_0$ is constant, where, in general, $\vartheta_l$ is defined by $\vartheta_l(x)=\vartheta(x)_l$ for $l\in\omega$.
\end{lemma}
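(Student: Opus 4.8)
The plan is to run a standard "category-vs-equivariance" argument in the style of Hjorth's turbulence theory, exploiting that $\mathcal F$ acts on $[\omega]^\omega$ by an operation which (by Proposition \ref{p.monoid}(v)) has enough continuity at the identity $\omega$, together with the fact that $F_2^X$-classes are countable. Fix $x_0\in[\omega]^\omega$ (say $x_0=\omega$) and let $\{z_m:m\in\omega\}$ enumerate the countable set $\{\vartheta(x_0)_l:l\in\omega\}$; by equivariance, for every $g\in\mathcal F$ we have $\{\vartheta(g\cdot x_0)_l:l\in\omega\}=\{\vartheta(x_0)_l:l\in\omega\}=\{z_m:m\in\omega\}$. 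First I would fix $l\in\omega$ and observe that $\mathcal F=\bigcup_{m\in\omega}\vartheta_l(\,\cdot\,\cdot x_0)^{-1}(\{z_m\})$, i.e. $\mathcal F$ is covered by the countably many sets $C_m=\{g\in\mathcal F:\vartheta_l(g\cdot x_0)=z_m\}$. Each $C_m$ is closed in the Polish topology on $\mathcal F$, because by Proposition \ref{p.monoid}(iii) left-multiplication $g\mapsto g\cdot x_0$ is continuous at $\omega$ — more carefully, one uses that $g\mapsto \mu^{g\cdot x_0}$, hence $g\mapsto g\cdot x_0$ in the relevant topology, is continuous on $\mathcal F$, so each $C_m$ is the preimage of a point under a continuous map. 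By the Baire category theorem (Proposition \ref{p.monoid}(i) says $\mathcal F$ is a perfect Polish, hence Baire, space) some $C_{m(l)}$ is non-meagre, and being closed it has non-empty interior: there is a non-empty open $V_l\subseteq\mathcal F$ on which $\vartheta_l(g\cdot x_0)\equiv z_{m(l)}$.

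That gives constancy of each $\vartheta_l$ on an open piece of the $\mathcal F$-orbit of $x_0$, but the open sets $V_l$ depend on $l$; the real content is to upgrade to a single open set in $[\omega]^\omega$ on which $\vartheta_0$ itself is constant, as the statement demands. Here I would use the continuity of $\vartheta:[\omega]^\omega\to X^\omega$ with respect to the Polish topology on $[\omega]^\omega$, not the relative topology coming from $\mathcal F$: pick a $g$ in the interior $V_0$, set $x_1=g\cdot x_0$, and note $\vartheta_0$ is constant on a relatively open neighbourhood of $x_1$ inside the orbit $\mathcal F\cdot x_0$. Then, running the argument again but now with $x_1$ in place of the fixed base point and letting $g$ range over a small neighbourhood of the identity $\omega$ in $\mathcal F$ (using Proposition \ref{p.monoid}(v), which says $g\cdot x_1$ can be pushed arbitrarily close to $x_1$ by taking $g=\omega\setminus\{k\}$ with $k$ large, and more generally right-multiplication by elements near $\omega$ moves $x_1$ a little), one sees that the set $\{g\cdot x_1:g\in\mathcal F \text{ near }\omega\}$ accumulates at $x_1$ in the Polish topology; combining with the constancy of $\vartheta_0$ on such points and the continuity of $\vartheta_0$ at $x_1$, we conclude $\vartheta_0$ is constant on a genuine Polish-open neighbourhood $U_0\ni x_1$ in $[\omega]^\omega$.

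The step I expect to be the main obstacle is precisely this last transfer from "constant on an open subset of the orbit" to "constant on a Polish-open subset of $[\omega]^\omega$": orbits of $\mathcal F$ need not be open, and right-multiplication in $\mathcal F$ is not known to be continuous at $\omega$ (see the remark after Proposition \ref{p.monoid}), so one has only the partial continuity of $\cdot$ recorded in (v). The trick will be to choose the base point inside the open set $V_0$ wisely and to use (v) to produce, for each $k$, the perturbation $g\cdot(\omega\setminus\{k\})$ lying in $V_0$ for $k$ large (since $V_0$ is open and $\rho(g\cdot(\omega\setminus\{k\}),g)\to0$), which forces $\vartheta_0$ to take the same value $z_{m(0)}$ at the points $(g\cdot(\omega\setminus\{k\}))\cdot x_0 = g\cdot((\omega\setminus\{k\})\cdot x_0)$ for all large $k$; these points converge to $g\cdot x_0$ in $[\omega]^\omega$, and the same works along all finite modifications, whose $\cdot x_0$-translates form a dense-enough set near $g\cdot x_0$ by Proposition \ref{p.monoid}(iv). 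Continuity of $\vartheta_0$ then pins down its value on a whole neighbourhood, yielding the desired $U_0$. If this "finite-modification density" is not quite enough to fill a neighbourhood, the fallback is to appeal directly to the turbulence-style local argument of Kechris–Sofronidis, recovering the same conclusion from the fact that every $\sim^x$-class is dense (Proposition \ref{p.monoid}(vi)).
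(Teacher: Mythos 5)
Your opening Baire-category step is sound as far as it goes: with $x_0=\omega$ the map $g\mapsto\vartheta_0(g\cdot x_0)$ is continuous on $(\mathcal F,\sqrt\rho)$ and, by equivariance, takes values in the countable set $\{\vartheta_l(x_0):l\in\omega\}$, so some level set is closed and non-meagre and hence contains a $\sqrt\rho$-ball $V_0$. The gap is in the transfer step, and as proposed it is fatal. The set on which you have established constancy of $\vartheta_0$ is (the $\cdot\,x_0$-image of) a $\sqrt\rho$-ball inside the single orbit $\mathcal F\cdot x_0$, and such a ball is \emph{nowhere dense} in $[\omega]^\omega$: to approximate, say, $(g_0\cap M)\cup[M,\infty)$ in $2^\omega$ one must flip unboundedly many bits of $g_0$ in an interval $[M,M')$, at total $\rho$-cost at least $\sum_{n\in[M,M')\setminus g_0}\tfrac{1}{4(n+1)}\to\infty$. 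Consequently ``constant on a set accumulating at $x_1$, plus continuity'' only determines $\vartheta_0(x_1)$, never its values on a neighbourhood; and the perturbations you can keep inside $V_0$ --- right multiplications $g\cdot(\omega\setminus\{k\})$, which are \emph{shifts} of $g$, and finite modifications at large positions --- never form a set dense in any $N_s$. Note also that a one-bit modification of $y$ is a \emph{left} multiplication $(\omega\setminus\{k\})\cdot y=y\setminus\{f_y(k)\}$, and equivariance under left multiplication only preserves the set $\{\vartheta_l(y):l\in\omega\}$, not the value $\vartheta_0(y)$ itself; so nothing in your setup yields $\vartheta_0(y)=\vartheta_0(y\setminus\{n\})$. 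The appeal to $(vi)$ of Proposition \ref{p.monoid} as a fallback does not help either: that clause concerns density of $\sim^x$-classes and is used in the application of the lemma, not in its proof.

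The missing idea is to run the category argument in \emph{two} variables. The paper sets $A_l=\{(g,x)\in\mathcal F\times[\omega]^\omega:\vartheta_l(g\cdot x)=\vartheta_0(x)\}$; these closed sets cover the product by equivariance, so some $A_{l_0}$ contains a box $V\times U$ with $U$ genuinely open in $[\omega]^\omega$. Fixing $g_0\in V$ and using $(v)$ to keep $g_0\cdot(\omega\setminus\{k\})\in V$ for all large $k$, associativity gives, for $y\in U$ and $z=(\omega\setminus\{k\})\cdot y\in U$,
\[
\vartheta_0(y)=\vartheta_{l_0}\bigl(g_0\cdot(\omega\setminus\{k\})\cdot y\bigr)=\vartheta_{l_0}(g_0\cdot z)=\vartheta_0(z),
\]
so $\vartheta_0$ is invariant under one-bit (hence finite) modifications inside a suitable $N_{s_0}\subseteq U$; such modification classes are dense in $N_{s_0}$, and continuity finishes. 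The auxiliary index $l_0$ and the second (Polish-open) variable are exactly what allow two genuinely $2^\omega$-nearby points of $U$ to be compared through a common translate $g_0\cdot z$, which your one-variable decomposition of $\mathcal F$ cannot do.
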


\begin{proof}[Proof \emph{(\`a la Hjorth).}] For the purpose of this proof, we identify $[\omega]^\omega$ with a $G_\delta$ subset of $2^\omega$ in the natural way. Define for each $l\in\omega$ a closed set
$$
A_l=\{(g,x)\in\mathcal F\times [\omega]^\omega:\vartheta_l(g\cdot x)=\vartheta_0(x)\},
$$
and let $l_0$ be least such that $A_{l_0}$ is non-meagre; this exists because $\mathcal F\times [\omega]^\omega=\bigcup_{l\in\omega} A_l$. Let $V\times U\subseteq A_{l_0}$ be open and non-empty, and fix $g_0\in V$. Using $(v)$ of Proposition \ref{p.monoid}, find $k_0$ such that
$$
(\forall k> k_0)\ g_0\cdot(\omega\setminus \{k\})\in V.
$$
Let $s_0\in 2^{<\omega}$ be such that $N_{s_0}\subseteq U$. 
By either making $s_0$ longer or $k_0$ larger, we may assume that $s_0$ is the characteristic function of a set with $k_0$ elements. 

\medskip

{\bf Claim:} If $y,z\in N_{s_0}$ differ on only one bit then $\vartheta_0(y)=\vartheta_0(z)$.

\medskip

{\it Proof of Claim}:
Suppose $n\in y$ and $n\notin z$. Let $k=f^{-1}_y(n)$, and note that $z=(\omega\setminus\{k\})\cdot y$ and $k>k_0$. Since $g_0\cdot(\omega\setminus\{k\})\in V$ we have
$$
\vartheta_0(z)=\vartheta_0(\omega\setminus\{k\}\cdot y)=\vartheta_{l_0}(g_0\cdot(\omega\setminus\{k\})\cdot y))=\vartheta_0(y).
$$
\hfill {\tiny Claim.}$\dashv$

\medskip

Now $U_0=N_{s_0}$ works, since the claim implies that the continuous function $\vartheta_0$ is constant on a dense set in $N_{s_0}$.
\end{proof}

\begin{proof}[Proof of Theorem \ref{t.mathiasmof}]
Assume that every $\Sigma^1_2$ set is completely Ramsey; in particular by \cite[0.9]{ihoda1989delta}, this holds in $L[r]$, where $r$ is a Mathias real over $L$. By Lemma \ref{l.code}, it is enough to show that there is no $\Pi^1_1$ mof. (For notational convenience, we suppress the parameter $a$.)

Suppose for a contradiction that $\mathcal A\subseteq P(2^\omega)$ is a $\Pi^1_1$ mof. Let $Q\subseteq [\omega]^\omega\times P(2^\omega)^\omega$ be
$$
Q=\{(x,(\nu_n)): (\forall n)(\nu_n\in\mathcal A\wedge\nu_n\not\perp\mu^x)\wedge (\forall \mu)(\mu\not\perp\mu^x\longrightarrow (\exists n)\nu_n\not\perp\mu)\}.
$$
(Thus $(x,(\nu_n))\in Q$ iff $(\nu_n)$ enumerates the countably many measures in $\mathcal A$ that are not orthogonal to $\mu^x$.) Since $\mathcal A$ is maximal the sections $Q_x$ are never empty, and by the $\Pi^1_1$ uniformization theorem, we can find a function $\vartheta:[\omega]^\omega\to P(2^\omega)^\omega$ which has a $\Pi^1_1$ graph, and such that $(x,\vartheta(x))\in Q$ for all $x\in [\omega]^\omega$. 
Note that if $\mu^x\sim\mu^y$ then 
$$
\{\vartheta_n(x):n\in\omega\}=\{\vartheta_n(y):n\in\omega\},
$$
so that by $(iii)$ of Proposition \ref{p.monoid}, $\vartheta$ is $(\mathcal F,F_2^{P(2^\omega)})$-equivariant.

It is easy to check that then $\vartheta^{-1}(U)$ is $\Sigma^1_2$ for every basic open set $U\subseteq P(2^\omega)^\omega$. Since every $\Sigma^1_2$ set is completely Ramsey, a standard argument \cite[Exercise~19.19,~p.~134]{kechris1995} shows that there is $x\in [\omega]^\omega$ such that $\vartheta\upharpoonrighttrict [x]^\omega$ is continuous (w.r.t.\ the Polish topology on $[x]^\omega$.)

From Lemma \ref{l.hjorth} it follows that there is a non-empty open set $U\subseteq [x]^\omega$ and $\nu\in P(2^\omega)$ such that $\vartheta_0(x)=\nu$ for all $x\in U$. By $(vi)$ of Proposition \ref{p.monoid} there is an uncountable (indeed a perfect) set $P\subseteq U$ such that if $x,y\in P$ and $x\neq y$, then $\mu^x\perp\mu^y$. Now for every $x\in P$ we have $\mu^x\not\perp\nu$, contradicting the ccc-below property of $\ll$.\end{proof}

The above line of argument also gives a new proof of the theorem of Preiss and Rataj, which we sketch below. Unlike the new proof that was given by Kechris and Sofronidis in \cite{kechris-sofronidis2001}, the proof below does not rely directly on Hjorth's turbulence theory. All the same, Lemma \ref{l.hjorth} above owes a debt to \cite[Lemma 3.14,~p.~42]{Hj:Book} that can scarcely be ignored.

\begin{theorem}[\cite{preiss-rataj1985}]
There are no analytic mofs in $P(2^\omega)$.
\end{theorem}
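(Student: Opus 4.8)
The plan is to adapt the argument just used for Theorem~\ref{t.mathiasmof}, replacing the appeal to ``$x$ is a Mathias real over $L$'' (which gave us that every $\Delta^1_2$ set is completely Ramsey, hence the continuity-on-$[x]^\omega$ reduction) by a direct Baire-category argument available whenever $\mathcal A$ is merely \emph{analytic}. Suppose toward a contradiction that $\mathcal A \subseteq P(2^\omega)$ is an analytic mof. As before, set
$$
Q=\{(x,(\nu_n))\in [\omega]^\omega\times P(2^\omega)^\omega:(\forall n)(\nu_n\in\mathcal A\wedge\nu_n\not\perp\mu^x)\wedge(\forall\mu)(\mu\not\perp\mu^x\to(\exists n)\nu_n\not\perp\mu)\},
$$
so that $Q_x$ enumerates the (by ccc-below, countably many) measures in $\mathcal A$ not orthogonal to $\mu^x$; since $\mathcal A$ is maximal, every section $Q_x$ is non-empty. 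Now $Q$ is analytic (using that $\mathcal A$ is analytic and $\not\perp$ is analytic on $P(2^\omega)^2$), so by the Jankov--von Neumann uniformization theorem we obtain a \emph{Baire-measurable} (indeed $\sigma(\boldsymbol\Sigma^1_1)$-measurable) uniformizing function $\vartheta:[\omega]^\omega\to P(2^\omega)^\omega$ with $(x,\vartheta(x))\in Q$ for all $x$. As in the proof of Theorem~\ref{t.mathiasmof}, if $\mu^x\sim\mu^y$ then $\{\vartheta_n(x):n\in\omega\}=\{\vartheta_n(y):n\in\omega\}$, so by $(iii)$ of Proposition~\ref{p.monoid} the map $\vartheta$ is $(\mathcal F,F_2^{P(2^\omega)})$-equivariant.

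Next I would pass to a comeagre set on which $\vartheta$ is continuous: since $\vartheta$ is Baire measurable, each component $\vartheta_l$ has the Baire property, so there is a comeagre (hence dense $G_\delta$) set $D\subseteq[\omega]^\omega$ on which $\vartheta\restriction D$ is continuous. The mild point to check is that Lemma~\ref{l.hjorth} still applies in this relativized setting — i.e.\ with ``$\vartheta$ continuous on $[\omega]^\omega$'' weakened to ``$\vartheta$ continuous on a comeagre set, and equivariant''. Inspecting the proof of Lemma~\ref{l.hjorth}, this is fine: one simply redefines $A_l=\{(g,x)\in\mathcal F\times D:\vartheta_l(g\cdot x)=\vartheta_0(x)\}$ (using that $g\cdot x\in D$ for comeagrely many $x$ when $g\in\mathcal F$, by $(iii)$ and $(iv)$ of Proposition~\ref{p.monoid}, which show that right-translation by a fixed $g\in\mathcal F$ moves a comeagre set to a comeagre set up to $\sim$-classes; alternatively shrink $D$ to a $\cdot$-invariant comeagre set using $(iii)$), and the Kuratowski--Ulam / category computation goes through verbatim, yielding a non-empty open $U_0$ and $\nu\in P(2^\omega)$ with $\vartheta_0(y)=\nu$ for all $y\in U_0\cap D$ — and since $\vartheta_0$ is continuous on the comeagre set $D$ and constant on the comeagre subset $U_0\cap D$ of $U_0$, in fact $\vartheta_0$ is constant on the $G_\delta$ set $U_0\cap D$, which is still perfect.

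Finally I would derive the contradiction exactly as in Theorem~\ref{t.mathiasmof}: by $(vi)$ of Proposition~\ref{p.monoid} there is a perfect set $P\subseteq U_0\cap D$ which is $\mu^\cdot$-orthogonal, i.e.\ $x\neq y$ in $P$ implies $\mu^x\perp\mu^y$; but for every $x\in P$ we have $\mu^x\not\perp\nu=\vartheta_0(x)$, so $\{\mu^x:x\in P\}$ is an uncountable family of pairwise-orthogonal measures all non-orthogonal to the single measure $\nu$, contradicting the ccc-below property of $\ll$. The main obstacle — and really the only place requiring care — is the bookkeeping in the previous paragraph: verifying that Lemma~\ref{l.hjorth}'s Baire-category argument is robust enough to run with $\vartheta$ only Baire measurable rather than continuous, which amounts to checking that the relevant sets $A_l$ still have the Baire property and that one may assume $D$ to be right-$\mathcal F$-invariant modulo meagre; both follow from Kuratowski--Ulam together with the continuity of left multiplication ($(iii)$) and the ``$|x\triangle y|<\infty\Rightarrow\rho(x,y)<\infty$'' fact ($(iv)$). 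Everything else is a line-for-line repetition of the Mathias argument.
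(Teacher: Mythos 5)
Your overall strategy---run the Mathias argument but replace complete Ramseyness by a plain Baire-category reduction via Jankov--von Neumann---is a genuinely different route from the one the paper takes, and the step you yourself flag as ``the only place requiring care'' is where it breaks. The proof of Lemma \ref{l.hjorth} uses the continuity of $\vartheta$ in two essential, \emph{pointwise} ways: first, the sets $A_l$ are closed, so the non-meagre $A_{l_0}$ actually \emph{contains} an open box $V\times U$; second, the Claim needs the two specific pairs $(g_0,z)$ and $(g_0\cdot(\omega\setminus\{k\}),y)$ to lie in $A_{l_0}$, where $z=(\omega\setminus\{k\})\cdot y$ and $k$ depends on the pair $(y,z)$. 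If $\vartheta$ is only continuous on a comeagre set $D$, the sets $A_l$ merely have the Baire property, so non-meagreness only yields comeagreness in some box; Kuratowski--Ulam then gives comeagre-many good $g$ for each fixed section, but to land the \emph{same} $g_0$ together with its right translate $g_0\cdot(\omega\setminus\{k\})$ in the relevant comeagre sections you would need right multiplication by $\omega\setminus\{k\}$ on $\mathcal F$ (and the map $y\mapsto(\omega\setminus\{k\})\cdot y$ on $[\omega]^\omega$) to be category-preserving. Parts $(iii)$ and $(iv)$ of Proposition \ref{p.monoid} give metric estimates for $\rho$, not category-preservation; $\mathcal F$ is only a monoid, so you cannot transport comeagre sets by inverting; and the Remark after Proposition \ref{p.monoid} explicitly warns that even continuity of right multiplication at the identity is unknown and ``seems unlikely''. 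So the assertion that ``the Kuratowski--Ulam computation goes through verbatim'' is precisely what fails.

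The paper avoids all of this by staying in the Ramsey setting, exactly as in Theorem \ref{t.mathiasmof}: since $\mathcal A$ is analytic and maximal, its complement is also analytic (a measure lies outside $\mathcal A$ iff some distinct member of $\mathcal A$ is non-orthogonal to it), so $\mathcal A$ is Borel; hence $Q'=\{(x,\nu):\mu^x\not\perp\nu\wedge\nu\in\mathcal A\}$ is Borel with non-empty countable sections, Luzin--Novikov writes it as a countable union of graphs of Borel functions $\vartheta_l$, and the Galvin--Prikry/Ellentuck fact \cite[Exercise~19.19,~p.134]{kechris1995} (Borel sets are completely Ramsey outright, so no hypothesis about generics over $L$ is needed) produces a single $x$ with every $\vartheta_l\restriction[x]^\omega$ genuinely continuous in the Polish topology; Lemma \ref{l.hjorth} then applies as stated on $[x]^\omega$. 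Your closing contradiction (Mycielski inside $U_0$ via $(vi)$ of Proposition \ref{p.monoid}, then ccc-below) is fine, but to salvage the category-only approach you would have to prove the missing category-preservation of the right action of $\mathcal F$, which is a substantive problem rather than bookkeeping.
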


\begin{proof}[Sketch of proof] Suppose $\mathcal A$ were an analytic mof. By maximality, $\mathcal A$ would be Borel, and maximality along with the ccc-below property gives that the Borel set
$$
Q'=\{(x,\nu)\in [\omega]^\omega\times P(2^\omega): \mu^x\not\perp\nu\wedge\nu\in\mathcal A\}
$$
would have all vertical sections $Q'_x$ non-empty and countable. Then we could find countably many Borel functions $\vartheta_l:[\omega]^\omega\to P(2^\omega)$ such that
$$
Q'=\bigcup_{l\in\omega} \grph(\vartheta_l).
$$
The equivariance
$$
\mu^x\sim\mu^y\implies\{\vartheta_n(x):n\in\omega\}=\{\vartheta_n(y):n\in\omega\}
$$
is clear. Again, \cite[Exercise~19.19,~p.134]{kechris1995} would allow us to find $x\in [\omega]^\omega$ such that $\vartheta_l\upharpoonrighttrict [x]^\omega$ is continuous (w.r.t.\ the Polish topology) for all $l$. A contradiction is then obtained in exactly the same way it was in the proof of Theorem \ref{t.mathiasmof}.
\end{proof}

\section{Open problems}\label{s.questions}

Given the results of this paper, we pose the following questions:

\begin{enumerate}
\item Does the analogue of Theorem \ref{t.sacks} hold for the Laver extension? (Note that the analogue of Galvin's theorem is false for Laver forcing.)
\item Is there a model with a $\Pi^1_1$ mof such that in addition, for any $r\in 2^\omega$, there is a Miller real over $L[r]$? (For Sacks forcing, this question has been answered in \cite{schr}.)
\item Is the existence of a $\Pi^1_1$ mof consistent with $2^{\omega}=\omega_3$?
(For $2^{\omega}=\omega_2$, this question has also been answered in \cite{schr}).
\item Does Theorem \ref{t.sacks} fail for $\mathcal{R}$ which are $\Pi^1_1$? What about Theorem~\ref{t.sacks.ch}?
\item Are there natural forcing notions other than Sacks and Miller to which the hypothesis of Theorem \ref{t.arboreal} applies?
\end{enumerate}

\bibliographystyle{amsplain}
\bibliography{mof-alt}

\end{document}